\begin{document}

\setcounter{tocdepth}{1}

\renewcommand{\qedsymbol}{$\square$}


\newtheorem*{equid}{Theorem}
\newtheorem*{conj}{Conjecture}
\newtheorem*{mainthm}{Main Theorem}
\newtheorem*{nonumprop}{Proposition}
\newtheorem{letthm}{Theorem}
\renewcommand*{\theletthm}{\Alph{letthm}}
\newtheorem{thm}{Theorem}[section]
\newtheorem{lem}[thm]{Lemma}
\newtheorem{cor}[thm]{Corollary}
\newtheorem{prop}[thm]{Proposition}
\theoremstyle{definition}
\newtheorem{rem}[thm]{Remark}
\newtheorem*{War}{Warning}
\newtheorem*{Quest}{Questions}
\newtheorem{Def}[thm]{Definition}
\newtheorem{Not}[thm]{Notation}
\newtheorem{Ass}[thm]{Assumption}
\newtheorem{ex}[thm]{Example}
\newtheorem*{exs}{Examples}
\newtheorem{obs}[thm]{Observation}
\newtheorem*{Ack}{Acknowledgements}

\numberwithin{equation}{section}


\title[On the growth of local intersection multiplicities in holomorphic dynamics]{On the growth  of local intersection multiplicities in holomorphic dynamics: a conjecture of Arnold}
\author[W. Gignac]{William Gignac}
\address{Centre de mathématiques Laurent Schwartz, \'{E}cole polytechnique, 91128 Palaiseau cedex, France}
\email{william.gignac@math.polytechnique.fr}
\date{\today}

\begin{abstract} We show by explicit example that local intersection multiplicities in holomorphic dynamical systems can grow arbitrarily fast, answering a question of V.\ I.\ Arnold. On the other hand, we provide results showing that such behavior is exceptional, and that typically local intersection multiplicities grow subexponentially.
\end{abstract}

\maketitle


\section*{Introduction}

Let $f\colon(\C^2,0)\to (\C^2,0)$ be a germ of a holomorphic map that fixes the origin $0\in \C^2$, and which is finite-to-one near $0$. Suppose that $C$ and $D$ are two germs of holomorphic curves passing through $0$. In this article, we will study the sequence $\mu(n) := C\cdot f^n(D)$ of local intersection multiplicities  at the origin, for $n\geq 0$. Specifically, we will address the question: \emph{how fast can the sequence $\mu(n)$ grow?} This and related questions were posed and studied by V.\ ~I.\ ~Arnold, who conjectured that, if $\mu(n)<\infty$ for every $n$, the sequence $\mu(n)$ grows at most exponentially fast, see \cite[\S5]{MR1215971}, \cite[p.\ 215]{MR1350971}, and \cite[problems 1994-49 and 1994-50]{MR2078115}. Arnold proved this conjecture in the case when $f$ is a local biholomorphism and in some cases when the complex derivative $f'(0)$ has exactly one zero eigenvalue \cite[\S5]{MR1215971}, but the general case appears to be unknown. A new proof of the conjecture in the case when $f$ is  a local  biholomorphism has recently been obtained by Seigal and Yakovenko \cite{SY}.

In this article, we will show by explicit construction that Arnold's conjecture is false in general, and that in fact the sequence $\mu(n)$ can grow arbitrarily fast. More precisely, we will prove the following theorem.

\begin{letthm}\label{thmA} Let $f\colon \C^2\to \C^2$ be the polynomial map $f(x,y) = (x^2 - y^4,y^4)$ and $\nu\colon \N\to \R$ be any function. Then there exist germs of holomorphic curves $C$ and $D$ through the origin such that the local intersection multiplicities $\mu(n) = C\cdot f^n(D)$ are always finite, and such that  $\mu(n)>\nu(n)$ for infinitely many $n$.
\end{letthm}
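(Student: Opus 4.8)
\emph{Proof sketch.} The plan is to realise $C$ and $D$ as smooth graphs taken from a family of ``infinite nested radical'' curves on which $f$ acts by the one-sided shift, thereby reducing the problem to a purely combinatorial construction of a single $\pm1$-sequence.

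For $\mathbf s = (s_0, s_1, s_2, \dots) \in \{+1,-1\}^{\mathbb N}$ put $\gamma_{\mathbf s}(y) = s_0 y^2 \rho_1(y)$, where $\rho_i = \sqrt{1 + s_i\, y^{4^i} \rho_{i+1}}$ (principal square root, $\rho_\infty = 1$), so that $\gamma_{\mathbf s}(y) = s_0 y^2\sqrt{1 + s_1 y^{4}\sqrt{1 + s_2 y^{16}\sqrt{1 + \cdots}}}$. A routine estimate (one checks $\lvert\rho_i - 1\rvert \le 2\lvert y\rvert^{4^i}$ for $\lvert y\rvert$ small) shows the finite truncations converge uniformly near $0$, so $\gamma_{\mathbf s}$ is holomorphic and vanishes at $0$; let $C_{\mathbf s}$ be the smooth germ $\{\,x = \gamma_{\mathbf s}(y)\,\}$. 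The key identity is that $f$ shifts the index sequence: $f(C_{\mathbf s}) = C_{T\mathbf s}$, where $T\mathbf s = (s_1, s_2, s_3, \dots)$. Indeed $\gamma_{\mathbf s}(y)^2 = y^4\bigl(1 + s_1 y^4\rho_2(y)\bigr)$, so $\gamma_{\mathbf s}(y)^2 - y^4 = s_1 y^8\rho_2(y)$, and hence the image of $C_{\mathbf s}$ under $f$, parametrised by $y \mapsto (\gamma_{\mathbf s}(y)^2 - y^4,\, y^4)$, becomes after the substitution $w = y^4$ precisely $w \mapsto (\gamma_{T\mathbf s}(w),\, w)$; iterating, $f^n(C_{\mathbf d}) = C_{T^n\mathbf d}$ for every $n$.

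Next I would record the mutual intersection multiplicities. If $\mathbf s$ and $\mathbf t$ have the same first $J$ entries, then any discrepancy between $\gamma_{\mathbf s}$ and $\gamma_{\mathbf t}$ originates at level $\ge J$ of the nested radical, and propagating it outward through the $J$ square roots (and the outermost factor $y^2$) multiplies it successively by $y^{4^1},\dots,y^{4^J}$; consequently $\gamma_{\mathbf s} - \gamma_{\mathbf t}$ vanishes to order at least $\ell_J := 2 + 4 + 4^2 + \cdots + 4^J = \tfrac{4^{J+1}+2}{3}$, so $C_{\mathbf s}\cdot C_{\mathbf t} \ge \ell_J$; in particular $\ell_J \to \infty$. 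Now fix $\mathbf e = (+1, +1, +1, \dots)$, set $C := C_{\mathbf e}$, and build $\mathbf d$ recursively as long blocks of $+1$'s separated by isolated $-1$'s: starting from $d_0 = -1$, once $\mathbf d$ is determined up to a $-1$ in position $n_k - 1$, pick $J_k$ with $\ell_{J_k} > \nu(n_k)$, declare $d_{n_k} = \dots = d_{n_k + J_k - 1} = +1$ and $d_{n_k + J_k} = -1$, and continue; set $D := C_{\mathbf d}$. Since $\mathbf d$ has infinitely many $-1$'s, $T^n\mathbf d \ne \mathbf e$ for every $n$, so $f^n(D) = C_{T^n\mathbf d} \ne C$ and every $\mu(n) = C\cdot f^n(D)$ is finite (two distinct smooth germs meet with finite multiplicity); and $T^{n_k}\mathbf d$ begins with $J_k$ copies of $+1$, hence agrees with $\mathbf e$ on its first $J_k$ coordinates, so $\mu(n_k) = C_{\mathbf e}\cdot C_{T^{n_k}\mathbf d} \ge \ell_{J_k} > \nu(n_k)$. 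As the $n_k$ are strictly increasing, $\mu(n) > \nu(n)$ for infinitely many $n$, completing the proof.

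The main obstacle is the second paragraph: isolating the family $C_{\mathbf s}$, verifying its convergence, and pinning down the shift identity $f(C_{\mathbf s}) = C_{T\mathbf s}$ via the substitution $w = y^4$. Once that symbolic-dynamics picture is in place, the order-of-vanishing estimate $C_{\mathbf s}\cdot C_{\mathbf t} \ge \ell_J$ and the recursive choice of $\mathbf d$ are straightforward.
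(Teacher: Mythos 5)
Your argument is correct, and it follows the same overall strategy as the paper's proof of Theorem~\ref{thmA}: construct a family $\{C_{\mathbf s}\}$ of smooth curve germs indexed by one-sided $\pm1$ (equivalently, binary) sequences on which $f$ acts by the one-sided shift, show that $C_{\mathbf s}\cdot C_{\mathbf t}$ is governed by the first index at which $\mathbf s$ and $\mathbf t$ disagree, and finish by a combinatorial choice of the two sequences. Where you genuinely diverge is in how the family is produced. The paper defines $\varphi_s = x+\sum_n a_n^s y^{2+4n}$ by a coefficient recursion forced by the factorization $\varphi_{\sigma(s)}\circ f=\varphi_s\cdot(x-a_0^sy^2-\cdots)$ and then needs a separate estimate ($|a_n^s|\le CR^n/n^2$) to prove convergence; you instead solve the underlying functional equation $\gamma_{\mathbf s}(y)^2-y^4=\gamma_{T\mathbf s}(y^4)$ in closed form by infinitely nested radicals. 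The two families coincide (your $\gamma_{\mathbf s}$ equals $-\sum_n a_n^s y^{2+4n}$ after relabeling the alphabet), and your propagation-of-discrepancy count $\ell_J=\tfrac{1}{3}(4^{J+1}+2)$ reproduces exactly Proposition~\ref{formalprop}(2). What the closed form buys is that holomorphy and the intersection numbers can be read off directly from the radicals, with no coefficient estimates; what the coefficient recursion buys is a purely algebraic construction valid over any field of characteristic $\ne 2$. One point you should make explicit: finiteness of every $\mu(n)$ requires that distinct sequences give distinct germs, i.e.\ that $\gamma_{\mathbf s}-\gamma_{\mathbf t}$ is not identically zero when $\mathbf s\ne\mathbf t$, and this is not formally contained in the one-sided bound $C_{\mathbf s}\cdot C_{\mathbf t}\ge\ell_J$. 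It does, however, fall out of the same computation: if the first disagreement is at index $J$, the innermost discrepancy $s_J-t_J=\pm2$ survives the outward propagation (each square-root step divides by a unit and multiplies by a power of $y$), so $\gamma_{\mathbf s}-\gamma_{\mathbf t}$ has order exactly $\ell_J<\infty$, matching the paper's exact value.
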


Note that the complex derivative $f'(0) $ for this map is $0$, so $f$ defines a \emph{superattracting} germ at $0$ (in general, $f$ is superattracting if $f'(0)$ is nilpotent). The dynamics of superattracting germs is an active area of research in holomorphic dynamics in several variables, see for instance \cite{MR1275463, MR1759437, MR2339287, MR2904007, Rug2, MR2853790, BEK, GR} and the notes \cite{Mattias}.

It has long been known that intersection multiplicities in \emph{smooth} dynamical systems (local or global) can grow arbitrarily fast \cite{MR1039340, MR1167716, MR1199715}. On the other hand,  it is also known that intersection multiplicities ``generically" grow at most exponentially (see \cite{MR1139553} for a precise formulation). Our second theorem is a holomorphic version of this principle.

\begin{letthm}\label{thmB} Let $f\colon (\C^d, 0)\to (\C^d,0)$ be a holomorphic fixed point germ at the origin $0\in \C^d$, where $d\geq 2$, such that $f$ is finite-to-one near $0$. Fix holomorphic function germs $\psi_1,\ldots, \psi_m$ at the origin such that $\{\psi_1 = 0\}\cap \cdots\cap \{\psi_m = 0\} = \{0\}$. For each $z\in \C^m$, let $D_z$ denote the hypersurface germ $D_z = \{z_1\psi_1 + \cdots + z_m\psi_m = 0\}$ through the origin. Fix an integer $k\in \{1,\ldots, d-1\}$. For each $a = (a_1,\ldots, a_k)\in (\C^m)^k$ and $b = (b_1,\ldots, b_{d-k})\in (\C^m)^{d-k}$, let $V_a$ and $W_b$ be the local intersection cycles $V_a :=  D_{a_1}\cdot\ldots \cdot D_{a_k}$ and $W_b := D_{b_1}\cdot\ldots\cdot D_{b_{d-k}}$. Then there exists a dense set $U\subseteq (\C^m)^k\times (\C^m)^{d-k}$, given as the complement of a countable union of algebraic subsets, with the property that for all $(a,b)\in U$, the cycles $V_a$ and $W_b$ are of codimension $k$ and $d-k$, respectively, and the sequence of local intersection multiplicities $\mu(n) := V_a\cdot f^n_*W_b$ grows subexponentially, that is to say, $\mu(n)\leq AB^n$ for some $A,B>0$.
\end{letthm}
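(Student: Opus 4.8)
The plan is to show that for generic $a$ and $b$ the integer $\mu(n)=V_a\cdot f^n_*W_b$ is a \emph{mixed multiplicity} of two $\mathfrak m$-primary ideals built from $f^n$, and then to bound that mixed multiplicity crudely by the local degree of $f^n$, which grows exponentially in $n$. Write $\mathcal O=\mathcal O_{\C^d,0}$, a regular — hence Cohen--Macaulay — local ring with maximal ideal $\mathfrak m$ and residue field $\C$, and put $\mathfrak a:=(\psi_1,\dots,\psi_m)\,\mathcal O$, $\mathfrak a_n:=(\psi_1\circ f^n,\dots,\psi_m\circ f^n)\,\mathcal O$, and $\mathfrak b_n:=(f^n_1,\dots,f^n_d)\,\mathcal O$ where $f^n=(f^n_1,\dots,f^n_d)$. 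The hypothesis $\bigcap_i\{\psi_i=0\}=\{0\}$ says $\mathfrak a$ is $\mathfrak m$-primary, and since $f$, hence $f^n$, is finite near $0$, the ideals $\mathfrak a_n$ and $\mathfrak b_n$ are $\mathfrak m$-primary too. For $z\in\C^m$ set $g_z:=z_1\psi_1+\cdots+z_m\psi_m\in\mathfrak a$, so that $D_z=\{g_z=0\}$ and $(f^n)^*D_z=\{g_z\circ f^n=0\}$ with $g_z\circ f^n\in\mathfrak a_n$.

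First I would reduce $\mu(n)$ to a colength. Since $f^n$ is finite (hence flat), the projection formula gives $\mu(n)=V_a\cdot f^n_*W_b=(f^n)^*V_a\cdot W_b$ as $0$-cycles at the origin, and for $(a,b)$ outside a proper algebraic subset of $(\C^m)^k\times(\C^m)^{d-k}$ the hypersurface germs $(f^n)^*D_{a_1},\dots,(f^n)^*D_{a_k},D_{b_1},\dots,D_{b_{d-k}}$ meet properly at $0$, while $V_a$, $W_b$ have the asserted codimensions, so that
\[
\mu(n)=\dim_{\C}\,\mathcal O\big/\big(g_{a_1}\!\circ f^n,\dots,g_{a_k}\!\circ f^n,\,g_{b_1},\dots,g_{b_{d-k}}\big).
\]
The $g_{a_i}\circ f^n$ are general elements of $\mathfrak a_n$ and the $g_{b_j}$ general elements of $\mathfrak a$; by the theory of joint reductions and general elements for mixed multiplicities (Rees, Teissier; see e.g.\ Huneke--Swanson, \emph{Integral closure of ideals, rings, and modules}, Ch.~17), there is a Zariski-dense open $U_n\subseteq(\C^m)^k\times(\C^m)^{d-k}$ on which this tuple is a joint reduction of $(\mathfrak a_n,\dots,\mathfrak a_n,\mathfrak a,\dots,\mathfrak a)$ ($k$ and $d-k$ copies), whence — $\mathcal O$ being Cohen--Macaulay — the colength above equals $e(\mathfrak a^{[d-k]},\mathfrak a_n^{[k]};\mathcal O)$. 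Taking $U:=\bigcap_{n\ge 0}U_n$, the complement of a countable union of proper algebraic subsets and hence dense, we obtain a single set on which $\mu(n)=e(\mathfrak a^{[d-k]},\mathfrak a_n^{[k]};\mathcal O)$ for every $n$, and on which $V_a$, $W_b$ have codimension $k$, $d-k$.

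It then remains to bound $e(\mathfrak a^{[d-k]},\mathfrak a_n^{[k]};\mathcal O)$ by $AB^n$. Fix $c$ with $\mathfrak m^c\subseteq\mathfrak a$ (possible since $\mathfrak a$ is $\mathfrak m$-primary); composing with $f^n$ gives $\mathfrak b_n^{\,c}\subseteq\mathfrak a_n$. Mixed multiplicity is non-increasing in each ideal slot and homogeneous of degree one in it, so
\[
e\big(\mathfrak a^{[d-k]},\mathfrak a_n^{[k]}\big)\le e\big((\mathfrak m^c)^{[d-k]},(\mathfrak b_n^{\,c})^{[k]}\big)=c^{d}\,e\big(\mathfrak m^{[d-k]},\mathfrak b_n^{[k]}\big),
\]
and since $\mathfrak b_n\subseteq\mathfrak m$ (as $f^n(0)=0$), replacing the $d-k$ copies of $\mathfrak m$ by $\mathfrak b_n$ only raises the mixed multiplicity, so $e(\mathfrak m^{[d-k]},\mathfrak b_n^{[k]})\le e(\mathfrak b_n^{[d]})=e(\mathfrak b_n;\mathcal O)=\dim_{\C}\mathcal O/\mathfrak b_n=\deg_0(f^n)$, the third equality because $(f^n_1,\dots,f^n_d)$ is a system of parameters in the Cohen--Macaulay ring $\mathcal O$. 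Finally $\deg_0(f^n)=\deg_0(f)^{\,n}$ by multiplicativity of the local degree under composition of finite germs, so $\mu(n)\le c^{d}\deg_0(f)^{\,n}$ on $U$, i.e.\ $\mu(n)\le AB^n$ with $A=c^{d}$ and $B=\deg_0(f)$.

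The step I expect to be the main obstacle is the reduction of the second paragraph: passing from the geometric intersection number $\mu(n)$ to the colength of an explicit ideal for generic $(a,b)$, and then to the mixed multiplicity $e(\mathfrak a^{[d-k]},\mathfrak a_n^{[k]})$ via the general-element description, requires some care with joint reductions and with simultaneously arranging all the needed properness and genericity conditions (so that the algebraic sets one throws away are proper, and countably many in total). If one prefers not to cite the mixed-multiplicity machinery, the same identity can be obtained by an induction on $d$, slicing by one general member of $\{D_z\}$ or $\{(f^n)^*D_z\}$ at a time and bookkeeping orders of vanishing along the resulting curve germs; either way, everything after the reduction — the monotonicity and homogeneity of mixed multiplicities, and the exponential bound $\deg_0(f^n)=\deg_0(f)^n$ — is routine.
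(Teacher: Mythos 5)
Your argument is correct, and its overall architecture coincides with the paper's: apply the projection formula to write $\mu(n)$ as a proper intersection of the pulled-back hypersurfaces with the $D_{b_j}$, identify this for $(a,b)$ in a nonempty Zariski open set $U_n$ with a mixed multiplicity of $\mathfrak{m}$-primary ideals, and take $U=\bigcap_n U_n$. (The paper quotes Teissier's generic-linear-combination formulation of mixed multiplicities; your joint-reduction/general-element version is the same statement in algebraic dress, and the Cohen--Macaulay hypothesis you invoke for Rees's theorem is harmless since $\mathcal O_{\C^d,0}$ is regular.) Where you genuinely diverge is the final estimate. The paper bounds the mixed multiplicity by Teissier's Minkowski-type inequality $e(\mathfrak b_1;\cdots;\mathfrak b_d)\leq e(\mathfrak b_1)^{1/d}\cdots e(\mathfrak b_d)^{1/d}$ and then controls each Samuel multiplicity via the chain of inclusions $f^{n*}\mathfrak a\supseteq \mathfrak m^{s^n r}$, where $\mathfrak m^r\subseteq\mathfrak a$ and $\mathfrak m^s\subseteq f^*\mathfrak m$, obtaining $e(f^{n*}\mathfrak a)\leq r^d s^{dn}$. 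You instead use only monotonicity and multilinearity of mixed multiplicities together with $\mathfrak b_n^{\,c}\subseteq \mathfrak a_n$ to compare with $e\bigl((f^n_1,\ldots,f^n_d)\bigr)=\deg_0(f)^n$. Your route avoids the (nontrivial) Minkowski-type inequality and produces the cleaner and sharper-looking bound $\mu(n)\leq c^d\deg_0(f)^n$ in terms of the local degree, whereas the paper's route yields a bound in terms of $\min\{s:\mathfrak m^s\subseteq f^{n*}\mathfrak m\}$, which is the quantity it wants to highlight in the remark following the proof. Both are complete proofs of the stated subexponential growth.
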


In the special case when $d = 2$ and $f$ is superattracting, we will in fact be able to prove much more about the sequence $\mu(n)$.

\begin{letthm}\label{thmD} With the same setup and notations as \hyperref[thmB]{Theorem~\ref*{thmB}} but with the additional assumptions that $d = 2$ and $f$ is superattracting, we have for each $(a,b)\in U$ that the sequence of local intersection multiplicities $\mu(n) := V_a\cdot f^n_*W_b$ at the origin eventually satisfies an integral linear recursion relation. Moreover, there exist constants $A_1, A_2>0$ such that $A_1c_\infty^n \leq \mu(n)\leq A_2 c_\infty^n$ as $n\to \infty$, where here $c_\infty>1$ denotes the asymptotic attraction rate of $f$ (see \S5 for the definition of $c_\infty$). If one replaces $f$ by $f^2$, then in fact there is a constant $A>0$ such that $\mu(n)\sim Ac_\infty^n$.
\end{letthm}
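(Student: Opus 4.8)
The plan is to recast $\mu(n)$ valuatively and then feed it into the structure theory of superattracting fixed-point germs on surfaces. Since $d=2$ forces $k=1$, both $V_a=D_{a_1}$ and $W_b=D_{b_1}$ are curve germs through the origin; let $\phi_a$ be a defining function of $V_a$. By the projection formula for local intersection numbers (valid since $f^n$ is finite-to-one), $\mu(n)=(f^n)^*V_a\cdot W_b$, where $(f^n)^*V_a$ is the divisor of $\phi_a\circ f^n$. For $b\in U$ the germ $W_b$ is reduced; decomposing it into irreducible branches $\Gamma_1,\dots,\Gamma_r$ and letting $w_j$ be the normalized curve semivaluation attached to $\Gamma_j$ in the valuative tree $\mathcal V$ of $(\C^2,0)$, one gets
\[
\mu(n)=\sum_{j=1}^r w_j(\phi_a\circ f^n)=\sum_{j=1}^r c(f^n,w_j)\cdot((f^n)_\bullet w_j)(\phi_a),
\]
where $(f^n)_\bullet$ denotes the induced action of $f$ on $\mathcal V$ and $c(f^n,w_j)=w_j(f^{n*}\mathfrak m)$ is the attached multiplier cocycle. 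Everything thus reduces to controlling the orbits $(f^n)_\bullet w_j$ and the cocycle values $c(f^n,w_j)$.

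I would then invoke the eigenvaluation theorem of Favre--Jonsson: $f_\bullet$ fixes a valuation $v_*\in\mathcal V$ — quasimonomial or a formal curve valuation, since $f$ is superattracting — which attracts every semivaluation outside an exceptional set cut out by countably many algebraic conditions on the parameters, and along that basin the cocycle grows at the asymptotic attraction rate, $c(f^n,v)^{1/n}\to c_\infty=c(f,v_*)>1$. The conditions defining $U$ may be taken to ensure, in the present setting, that for $(a,b)\in U$ each branch valuation $w_j$ lies in the basin of $v_*$, so that $(f^n)_\bullet w_j\to v_*$, and that $\phi_a$ is generic in the sense that $v\mapsto v(\phi_a)$ is continuous at $v_*$ with $v_*(\phi_a)\in(0,\infty)$ — equivalently, $V_a$ shares no branch with the formal curve attached to $v_*$ when the latter is a curve valuation, and $v_*$ is not centered on $V_a$ in any modification. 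Then $((f^n)_\bullet w_j)(\phi_a)\to\kappa:=v_*(\phi_a)$, the same positive finite constant for every $j$.

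To upgrade this from a rate estimate to a linear recursion I would pass to the rigid model. By the rigidification of superattracting germs (Favre--Jonsson, Ruggiero), after a finite composition of point blowups $\pi\colon X\to(\C^2,0)$ adapted to $v_*$ the germ $f$ lifts to a rigid germ $F$ along the totally invariant exceptional curve $\mathcal C$; near the point of $\mathcal C$ carrying $v_*$, in suitable local coordinates in which $\mathcal C\subseteq\{xy=0\}$, the germ $F$ is monomial up to units, with associated $2\times2$ matrix $M$ of nonnegative integers whose Perron--Frobenius eigenvalue is $c_\infty$ (when $v_*$ is a curve valuation, $c_\infty\in\mathbb{Z}_{\geq2}$ is instead the multiplicity of the induced self-map of the associated analytic curve). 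Transporting $W_b$ and its forward iterates through $\pi$, one expresses both $c(f^n,w_j)$ and $\mu(n)$ as matrix coefficients $\langle M^nu_j,\,v_a\rangle$, with $v_a$ determined by the strict transform of $V_a$, plus correction terms coming from the finitely many branches of $V_a$ and $W_b$ not directly seen by $v_*$ and from the finitely many exceptional components; these corrections are bounded or grow at a strictly smaller exponential rate and satisfy linear recursions of their own, so Cayley--Hamilton applied to $M$ and to the finite correction system yields an integral linear recursion for $\mu(n)$ once $n$ is large. Since any $2\times2$ matrix with nonnegative entries has nonnegative discriminant, the second eigenvalue $\theta$ of $M$ is real with $|\theta|\le c_\infty$, and $|\theta|=c_\infty$ forces $\theta=-c_\infty$, in which case $M$ is diagonalizable and $M^2=c_\infty^2I$. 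Combined with the previous paragraph this gives $A_1c_\infty^n\le\mu(n)\le A_2c_\infty^n$; replacing $f$ by $f^2$ replaces $M$ by $M^2$, whose dominant eigenvalue $c_\infty^2$ is then simple or the only one, so the correction terms become genuinely lower order and the sequence $n\mapsto\mu(2n)$ attached to $f^2$ satisfies $\mu(2n)\sim Ac_\infty^{2n}$ with $A>0$ (positivity of $A$ being forced by the lower bound just obtained).

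I expect the main obstacle to be the third step: transferring the intersection computation faithfully into the rigid model and proving that the correction terms have strictly smaller exponential order, uniformly in $n$. This requires tracking precisely through which point of $\mathcal C$ each iterate $F^n$ of the strict transform of $W_b$ passes, and how the exceptional part of $(f^n)^*V_a$ contributes, and it is complicated by the fact that $F$ is only a germ along $\mathcal C$ rather than an endomorphism of $X$. A secondary subtlety is the case of an irrational, or curve, eigenvaluation: one must check that the lattice carrying the relevant $F^*$-action remains of finite rank — equivalently, that the generic curves $V_a$ and $W_b$ meet only finitely much of the tower of blowups over $v_*$ — so that the finiteness and integrality of the recursion survive.
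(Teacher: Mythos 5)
Your overall strategy---recast $\mu(n)$ valuatively, invoke an attraction theorem for $f_\bullet$ on the valuative tree, and extract a recursion from a $2\times 2$ integer matrix via Cayley--Hamilton---is the right one, but your first step routes everything through the curve semivaluations $w_j$ attached to the branches of $W_b$, and this is where the argument breaks. Curve semivaluations have infinite skewness, so the attraction theorem you want to quote ([GR, Theorem 3.1]) does not apply to them: it guarantees $f^n_\bullet\nu\to\nu_\star$ only for $\nu$ with $\alpha(\nu)<+\infty$. The complement of the basin can genuinely contain uncountably many curve valuations---that is exactly what \S\S1--3 of this paper exhibit---so your assertion that ``the conditions defining $U$ may be taken to ensure each $w_j$ lies in the basin of $v_*$'' is not a bookkeeping remark but a substantial unproved claim, and it is not clear it can be arranged by countably many algebraic conditions on $b$. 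Moreover, even granting convergence $((f^n)_\bullet w_j)(\phi_a)\to\kappa$, this only yields the asymptotic $\mu(n)\sim\kappa\sum_j c(f^n,w_j)$; the exact eventual integral linear recursion requires the ``correction terms'' in your rigid-model step to be controlled precisely, which you correctly identify as the main obstacle but do not resolve.

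The paper sidesteps both problems by never touching the branches of $W_b$. For $(a,b)\in U$ one has $\mu(n)=e(f^{n*}\mathfrak{a};\mathfrak{a})$ by Teissier genericity (this is what $U$ is for, and it removes the parameters $(a,b)$ from the problem entirely). The mixed multiplicity is then computed as an intersection number $-(Z_1\cdot Z_2)$ on a common log resolution and expanded in the dual basis, giving the exact finite formula $\mu(n)=\sum_{i,j}a_ia_jb_{E_i}b_{E_j}\,\alpha(f^n_\bullet\nu_{E_i}\wedge\nu_{E_j})\,c(f^n,\nu_{E_i})$ over the finitely many \emph{divisorial} valuations $\nu_{E_i}$ attached to the normalized blowup of $\mathfrak{a}$. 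Divisorial valuations have finite skewness, so [GR, Theorem 3.1] applies after replacing $f$ by $f^2$; the recursion then comes from the exact identity $c(f^{np},\lambda)\,\alpha(f^{np}_\bullet\lambda\wedge\nu_{E_j})=-r_n(\check V\cdot Z_{\nu_{E_j},\pi})-s_n(\check W\cdot Z_{\nu_{E_j},\pi})$ with $(r_n,s_n)=(r_{n-1},s_{n-1})M$ on an $f_\bullet$-invariant interval $[\nu_V,\nu_W]$, as in [GR, Lemma 6.2]---no lower-order correction terms ever appear, and the asymptotic $\mu(n)\sim Ac_\infty^n$ for $f^2$ follows from $\alpha(f^n_\bullet\nu_{E_i}\wedge\nu_{E_j})\to\alpha(\nu_\star\wedge\nu_{E_j})$ together with $c(f^n,\nu_{E_i})\sim Bc_\infty^n$, rather than from your eigenvalue analysis of $M$. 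To salvage your route you would at minimum need to replace the branch valuations $w_j$ by divisorial data and quantify the error, which is essentially re-deriving the mixed-multiplicity identity.
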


The proof of \hyperref[thmB]{Theorem~\ref*{thmB}}, which will be given in \S4, is a rather easy application of Teissier's theory of mixed multiplicities. \hyperref[thmD]{Theorem~\ref*{thmD}}, which we will prove in \S5, relies on recent non-elementary results of the author and M.\ Ruggiero \cite{GR} within the subject of dynamics on valuation spaces. Unlike Theorems \ref{thmB} and \ref{thmD}, our proof of \hyperref[thmA]{Theorem~\ref*{thmA}} requires no high-powered techniques, and lends itself to an easy overview, which we give now.


Let $S$ be the space of binary sequences $S = \{0,1\}^\N$, and let $\sigma\colon S\to S$ denote the left-shift map on $S$. For any two sequences $s,t\in S$, set $M(s,t)$ to be the smallest index $m$ such that $s_m\neq t_m$, with $M(s,t) = \infty$ if $s = t$. To prove \hyperref[thmA]{Theorem~\ref*{thmA}}, we will construct a family $\{C_s\}_{s\in S}$ of holomorphic curve germs through the origin with the properties that \begin{enumerate}
\item[1.] $f(C_s) = C_{\sigma(s)}$ for each $s\in S$, and
\item[2.] for any $s,t\in S$, the local intersection multiplicity $C_s\cdot C_t$ is $\asymp 4^{M(s,t)}$.
\end{enumerate} The theorem then follows easily from the following simple proposition, the proof of which is left to the reader.

\begin{nonumprop} Let $\nu\colon \N\to \R$ be any function. Then there exist sequences $s,t\in S$ such that $M(s,\sigma^n(t))$ is finite for all $n\geq 0$, and such that $M(s,\sigma^n(t))>\nu(n)$ for infinitely many $n$.
\end{nonumprop}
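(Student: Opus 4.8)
The plan is to make a particularly convenient choice of $s$, namely the constant sequence $s = (0,0,0,\ldots)$. For this $s$ the quantity $M(s,u)$ is completely transparent: $M(s,u) = \infty$ exactly when $u$ is itself the all-zeros sequence, and otherwise $M(s,u)$ is the index of the first $1$ appearing in $u$; in particular $M(s,u) > k$ if and only if $u_0 = u_1 = \cdots = u_k = 0$. Applying this with $u = \sigma^n(t)$, the two requirements of the proposition translate into: (i) for every $n\geq 0$ the tail $(t_n, t_{n+1}, \ldots)$ contains at least one $1$; and (ii) for infinitely many $n$ the tail $(t_n, t_{n+1}, \ldots)$ begins with strictly more than $\nu(n)$ consecutive $0$'s. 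Requirement (i) is automatic as soon as $t$ has infinitely many $1$'s, so everything reduces to constructing a $t$ with infinitely many $1$'s but with suitably long runs of $0$'s placed at suitably chosen positions.

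I would build such a $t$ by a greedy blockwise procedure. Set $n_1 = 0$, and having defined $n_k$, pick a non-negative integer $L_k > \nu(n_k)$ (possible since $\N$ is unbounded), declare the coordinates $t_{n_k}, t_{n_k+1}, \ldots, t_{n_k + L_k}$ all equal to $0$ and the coordinate $t_{n_k + L_k + 1}$ equal to $1$, and set $n_{k+1} = n_k + L_k + 2$. The intervals $[n_k, n_{k+1})$ partition $\N$, so every coordinate of $t$ is defined exactly once, and $t$ has a $1$ in position $n_k + L_k + 1$ for each $k$, hence infinitely many $1$'s. By the remark above this already gives $M(s, \sigma^n(t)) < \infty$ for all $n \geq 0$, which is the first assertion.

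For the second assertion, note that by construction the tail $\sigma^{n_k}(t)$ begins with exactly $L_k + 1$ zeros (positions $n_k$ through $n_k + L_k$ of $t$) followed immediately by a $1$, so $M(s, \sigma^{n_k}(t)) = L_k + 1 > \nu(n_k)$. Since $n_k \to \infty$ as $k \to \infty$, the inequality $M(s, \sigma^n(t)) > \nu(n)$ holds for infinitely many $n$, and the pair $(s,t)$ does what is wanted. There is no real obstacle in this argument; the only point that requires a small amount of care is precisely requirement (i), i.e.\ ensuring that $M(s,\sigma^n(t))$ stays finite for \emph{every} $n$ and not merely along the subsequence $n_k$ — and this is exactly why one separates consecutive blocks of zeros by a single $1$ instead of attempting to build one ever-lengthening block of zeros.
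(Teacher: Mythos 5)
Your argument is correct and complete: taking $s$ to be the all-zeros sequence reduces the problem to building a $t$ with infinitely many $1$'s whose runs of $0$'s starting at the positions $n_k$ have length exceeding $\nu(n_k)$, and your blockwise construction does exactly that. The paper explicitly leaves the proof of this proposition to the reader, so there is no argument to compare against, but yours is the natural one and fills the gap completely.
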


In \S1, we will construct the $C_s$ as \emph{formal} curves, that is, as curves defined by irreducible formal power series $\varphi_s\in \C\llbracket x, y\rrbracket$. The coefficients of the power series $\varphi_s$ will be determined via a recursive procedure that guarantees properties 1.\ and 2.\ are satisfied. In \S2, we will prove that each formal power series $\varphi_s$ is actually convergent, and hence that the curve germs $C_s$ are \emph{holomorphic}. It should be noted that the construction of the power series $\varphi_s$ in \S1 is purely algebraic, and that if we replace the word \emph{holomorphic} with \emph{formal}, \hyperref[thmA]{Theorem~\ref*{thmA}} holds when $\C$ is replaced by any field of characteristic $\neq 2$. 

In \S3, we will sketch an alternative, geometric construction of the curves $C_s$, valid when working over $\C$. The construction realizes the curves as a \emph{Cantor bouquet} of holomorphic stable manifolds,  similar to the construction carried out first in \cite{MR1808626} for rational maps; see also the related works \cite{Yam2, MR2195140, MR2307152, Tomoko, MR2629648}. Finally, it is worth mentioning that this counterexample is by no means isolated; one can construct similar Cantor bouquets for many other superattracting germs.

\subsection*{Acknowledgements} I would like to wholeheartedly thank Mattias Jonsson and Charles Favre for their support and guidance during the course of this project. I would also like to thank the referee for very useful commentary, and especially for pointing out the  work of Yamagishi \cite{MR1808626}. This work was supported by the grants DMS-1001740 and DMS-1045119, as well as the ERC-Starting grant ``Nonarcomp" no. 307856.

\section{The formal construction of the curves $C_s$}

In this and the next two sections, we let $f\colon \C^2\to \C^2$ be the polynomial map $f(x,y) = (x^2 - y^4, y^4)$. We will write  $S$ to denote the space of binary sequences $S = \{0,1\}^\N$, and write $\sigma$ to denote the left-shift map $\sigma\colon S\to S$.

We now define a family $\{\varphi_s\}_{s\in S}$ of irreducible formal power series $\varphi_s\in \C\llbracket x,y\rrbracket$ of the  form \begin{equation}\label{eqn1} \varphi_s(x,y) = x + a_0^sy^2 + a_1^sy^6 + \cdots + a_n^sy^{2+4n} + \cdots,\end{equation} by recursively defining the coefficients $a_n^s$, in the following manner. First, we set $a_0^s = (-1)^{s_0}$; then, assuming $a_0^s,\ldots, a_n^s$ have been defined for all $s\in S$, we set\begin{equation}\label{recursion}
a_{n+1}^s = \begin{cases}\displaystyle-\frac{1}{2a_0^s}\sum_{\substack{i+j = n+1\\i,j\geq 1}} a_i^sa_j^s & \mbox{ if }4\nmid n.\bigskip\\
\displaystyle-\frac{a_{n/4}^{\sigma(s)}}{2a_0^s} - \frac{1}{2a_0^s}\sum_{\substack{i+j = n+1\\i,j\geq 1}} a_i^sa_j^s & \mbox{ if }4\mid n.\end{cases}\end{equation} Let $C_s$ denote the formal curve through the origin in $\C^2$ defined by $\varphi_s$. As the next proposition shows, the curves $\{C_s\}_{s\in S}$ have the properties discussed in the introduction.

\begin{prop}\label{formalprop} The formal curves $\{C_s\}_{s\in S}$ satisfy \begin{enumerate}
\item[$1.$] $f(C_s) = C_{\sigma(s)}$ for all $s\in S$, and
\item[$2.$] the local intersection multiplicity $C_s\cdot C_t$ is $\frac{1}{3}(4^{m+1} + 2)$, where $m$ is the smallest index such that $s_m\neq t_m$.
\end{enumerate}
\end{prop}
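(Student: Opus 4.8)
The plan is to prove both assertions by first extracting the single functional identity that the recursion~\eqref{recursion} is built to encode. Write
\[
\gamma_s(y) \;:=\; -\bigl(a_0^s y^2 + a_1^s y^6 + \cdots\bigr) \;=\; -\sum_{n\geq 0} a_n^s\, y^{2+4n} \ \in\ \C\llbracket y\rrbracket,
\]
so that $\varphi_s(x,y) = x - \gamma_s(y)$ and $C_s$ is the smooth formal branch parametrized by $\tau\mapsto(\gamma_s(\tau),\tau)$. The claim is that the coefficients $a_n^s$ produced by \eqref{recursion} are precisely those for which
\begin{equation*}
\gamma_s(\tau)^2 - \tau^4 \;=\; \gamma_{\sigma(s)}(\tau^4) \qquad \text{for every } s\in S. \tag{$\ast$}
\end{equation*}
To verify this I would set $g_s(w) := \sum_{n\geq 0} a_n^s w^n$, note that $\gamma_s(\tau) = -\tau^2 g_s(\tau^4)$, and observe that after cancelling $\tau^4$ and substituting $w = \tau^4$, identity~$(\ast)$ reads $g_s(w)^2 = 1 - w\,g_{\sigma(s)}(w^4)$. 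Expanding $g_s(w)^2 = \sum_{k\geq 0}\bigl(\sum_{i+j=k} a_i^s a_j^s\bigr) w^k$ and comparing the coefficient of $w^k$: for $k=0$ this gives $(a_0^s)^2 = 1$, true since $a_0^s = (-1)^{s_0}$; for $k = n+1\geq 1$, separating off the $i=0$ and $j=0$ summands, it becomes exactly \eqref{recursion}, the two cases $4\nmid n$ and $4\mid n$ corresponding to whether the monomial $w^{n+1}$ is absent from, or present with coefficient $-a_{n/4}^{\sigma(s)}$ in, the right-hand side $1 - w\,g_{\sigma(s)}(w^4) = 1 - \sum_{m\geq 0} a_m^{\sigma(s)}\, w^{4m+1}$.

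Granting $(\ast)$, assertion~1 is immediate: $f(C_s)$ is parametrized by
\[
\tau\ \longmapsto\ f\bigl(\gamma_s(\tau),\tau\bigr) \;=\; \bigl(\gamma_s(\tau)^2 - \tau^4,\ \tau^4\bigr) \;=\; \bigl(\gamma_{\sigma(s)}(\tau^4),\ \tau^4\bigr),
\]
both of whose coordinates are power series in $\tau^4$; hence this parametrization has the same image as the primitive parametrization $u\mapsto(\gamma_{\sigma(s)}(u),u)$, namely $C_{\sigma(s)}$. (The restriction of $f$ is in fact $4$-to-$1$ from $C_s$ onto $C_{\sigma(s)}$, but this is immaterial here, as only reduced curves are being compared.)

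For assertion~2 I would use the standard fact that, for the smooth branches $C_s$ and $C_t$, one has $C_s\cdot C_t = \operatorname{ord}_\tau\bigl(\gamma_s(\tau) - \gamma_t(\tau)\bigr)$, and then induct on $m := M(s,t)$. If $m = 0$, then $a_0^s = (-1)^{s_0} \neq (-1)^{t_0} = a_0^t$, so $\gamma_s - \gamma_t$ has $\tau$-order $2$ and $C_s\cdot C_t = 2 = \tfrac13(4^1 + 2)$. If $m\geq 1$, then $s_0 = t_0$ and $M(\sigma(s),\sigma(t)) = m-1$; subtracting the instances of $(\ast)$ for $s$ and for $t$ yields
\begin{equation*}
\bigl(\gamma_s(\tau) - \gamma_t(\tau)\bigr)\bigl(\gamma_s(\tau) + \gamma_t(\tau)\bigr) \;=\; \gamma_{\sigma(s)}(\tau^4) - \gamma_{\sigma(t)}(\tau^4).
\end{equation*}
Comparing $\tau$-orders: the right-hand side has order $4\,(C_{\sigma(s)}\cdot C_{\sigma(t)})$ (substitute $u = \tau^4$), while the factor $\gamma_s + \gamma_t$ on the left has order exactly $2$, since its $\tau^2$-coefficient is $-2a_0^s \neq 0$ (here $a_0^s = a_0^t$). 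Thus $C_s\cdot C_t = 4\,(C_{\sigma(s)}\cdot C_{\sigma(t)}) - 2$, and the inductive hypothesis $C_{\sigma(s)}\cdot C_{\sigma(t)} = \tfrac13(4^m + 2)$ gives $C_s\cdot C_t = 4\cdot\tfrac13(4^m + 2) - 2 = \tfrac13(4^{m+1} + 2)$.

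The only step that is not routine bookkeeping is the first: recognizing that the unmotivated-looking recursion \eqref{recursion} is exactly the coefficientwise form of the clean identity $(\ast)$, equivalently of the relation $f(C_s) = C_{\sigma(s)}$. Everything after that is short. The two points deserving a little care are the split between $m = 0$ and $m\geq 1$ in the induction — the order-$2$ computation for $\gamma_s + \gamma_t$ genuinely relies on $s_0 = t_0$ — and keeping in mind that all identities above live in $\C\llbracket x,y\rrbracket$ or $\C\llbracket\tau\rrbracket$, the convergence of the $\varphi_s$ being deferred to \S2.
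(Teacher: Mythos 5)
Your proof is correct. For part~1 it is the paper's argument in different clothing: your identity $(\ast)$, equivalently $g_s(w)^2 = 1 - w\,g_{\sigma(s)}(w^4)$, is precisely the factorization $\varphi_{\sigma(s)}\circ f = (x-\gamma_s(y))(x+\gamma_s(y))$ that the paper verifies, and your coefficient-of-$w^{n+1}$ check that the recursion encodes it is the same computation, just organized as a generating-function identity and concluded via parametrizations rather than divisibility in $\C\llbracket x,y\rrbracket$. For part~2 you genuinely diverge: the paper tracks the index of the first coefficient at which $\varphi_s$ and $\varphi_t$ disagree and propagates it through the recursion (the first differing index jumps from $n$ to $4n+1$ under $\sigma^{-1}$), whereas you derive the intersection-theoretic relation $C_s\cdot C_t = 4\,(C_{\sigma(s)}\cdot C_{\sigma(t)}) - 2$ directly from $(\ast)$ by factoring $\gamma_s^2-\gamma_t^2$ and comparing $\tau$-orders. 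The two are equivalent, but your route is more conceptual; it is in fact the algebraic shadow of the geometric derivation the paper gives in \S 3, where the same relation $4\mu-2$ comes from the projection formula and the local degree $4$ of the lifted map. It also makes transparent exactly where $s_0=t_0$ is used, namely in $\ord_\tau(\gamma_s+\gamma_t)=2$. No gaps.
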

\begin{proof} To prove 1., we must show that $\varphi_s\mid (\varphi_{\sigma(s)}\circ f)$ in the ring $\C\llbracket x,y\rrbracket$. Indeed, we show that $\varphi_{\sigma(s)}\circ f =  (x + a_0^sy^2 + a_1^sy^6 + \cdots)(x - a_0^sy^2 - a_1^sy^6 - \cdots)$. To see this,  observe that  \begin{equation}\label{eqn3}
(x + a_0^sy^2 + a_1^sy^6 + \cdots)(x - a_0^sy^2 - a_1^sy^6 - \cdots) = x^2 - y^4 - \sum_{n\geq 0}\sum_{\substack{i+j = n+1\\i,j\geq 0}}a_i^sa_j^sy^{4(n+2)}.\end{equation}  The recursion formula (\ref{recursion}) then gives that the coefficient of $y^{4(n+2)}$ in this expression is $0$ when $4\nmid n$ and is $a_{n/4}^{\sigma(s)}$ when $4\mid n$, so the right hand side of (\ref{eqn3}) is \[x^2 - y^4 + \sum_{k\geq 0} a_k^{\sigma(s)}y^{4(4k + 2)} = x^2 - y^4 + \sum_{k\geq 0}a_k^{\sigma(s)}y^{8 + 16k} = \varphi_{\sigma(s)}\circ f.\] This completes the proof of 1. 

To prove 2., we first make the easy observation that the intersection multiplicity $C_s \cdot C_t$ is precisely the smallest integer $k$ such that the coefficients of $y^k$ in the power series $\varphi_s$ and $\varphi_t$ differ. From equation (\ref{eqn1}), it then follows that $C_s\cdot C_t = 2 + 4n$, where $n$ is the smallest integer such that $a_n^s\neq a_n^t$. We will prove 2.\ by induction on $m\geq 0$, where $m$ is the smallest index such that $s_m\neq t_m$. If $m = 0$, then $a_0^s\neq a_0^t$, and hence $C_s\cdot C_t = 2$, establishing the base case of the induction. Now assume that $m>0$ is the smallest index index such that $s_m\neq t_m$. Then, by induction, $C_{\sigma(s)}\cdot C_{\sigma(t)} = \frac{1}{3}(4^m + 2)$, from which it follows that the first index $n$ for which $a_n^{\sigma(s)}\neq a_n^{\sigma(t)}$ is $n = \frac{1}{3}(4^{m-1}-1)$. Using the recursion formula (\ref{recursion}), we can then conclude that the first index $n$ such that $a_n^s\neq a_n^t$ is \[
n = 1 + \frac{4}{3}(4^{m-1} - 1) = \frac{1}{3}(4^m-1).\] Thus $C_s\cdot C_t = 2 + \frac{4}{3}(4^m-1) = \frac{1}{3}(4^{m+1}+2),$ completing the induction, and the proof.
\end{proof}

\section{Analyticity}

In this section, we complete the proof of \hyperref[thmA]{Theorem~\ref*{thmA}} by proving that each of the power series $\varphi_s$ constructed in \S1 are convergent. Indeed, using very crude estimates, we will prove the following proposition.

\begin{prop}\label{analyticity} Let $C = 1/20$ and $R = 10$. Then $|a_n^s|\leq CR^n/n^2$ for each $n\geq 1$ and each $s\in S$. In particular, $\varphi_s$ converges on the set $\{(x,y)\in \C^2 : |y|< 1/10\}$.
\end{prop}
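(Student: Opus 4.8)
The plan is to prove the bound $|a_n^s|\le CR^n/n^2$ by strong induction on $n$, treating all $s\in S$ simultaneously, so that the recursion \eqref{recursion} can be fed its own output. The base case $n=1$ is a direct computation: since $a_0^s=\pm 1$, the $4\nmid 0$... wait, $n=0$ gives $4\mid 0$, so $a_1^s = -\tfrac{a_0^{\sigma(s)}}{2a_0^s}$, which has absolute value exactly $1/2$; one checks $1/2\le CR = 1/2$, so the base case holds (with equality, which is why the constants are chosen this way). For the inductive step, assume the bound holds for all indices $1\le k\le n$ and all $s$, and estimate $a_{n+1}^s$ from \eqref{recursion}. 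There are two contributions: the convolution sum $\sum_{i+j=n+1,\,i,j\ge1}a_i^sa_j^s$, and, when $4\mid n$, the extra term $a_{n/4}^{\sigma(s)}$. Since $|a_0^s|=1$, the prefactor $\tfrac{1}{2|a_0^s|}=\tfrac12$ is harmless.

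The main work is bounding the convolution sum. Using the inductive hypothesis, $\bigl|\sum_{i+j=n+1,\,i,j\ge1}a_i^sa_j^s\bigr|\le C^2R^{n+1}\sum_{i=1}^{n}\tfrac{1}{i^2(n+1-i)^2}$, and the point is that $\sum_{i=1}^{n}\tfrac{1}{i^2(n+1-i)^2}\le \tfrac{K}{(n+1)^2}$ for an absolute constant $K$ — this is the standard fact that the "discrete convolution of $1/i^2$ with itself" is again $O(1/n^2)$, provable by splitting the sum at $i\le (n+1)/2$ and $i> (n+1)/2$ and comparing with $\sum 1/i^2 \le \pi^2/6 < 2$. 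A clean crude choice is $K=4$: for $i\le (n+1)/2$ one has $n+1-i\ge (n+1)/2$, so those terms are $\le \tfrac{4}{(n+1)^2}\sum_{i\ge1}\tfrac1{i^2}\le \tfrac{8}{(n+1)^2}$, and by symmetry the whole sum is $\le \tfrac{16}{(n+1)^2}$. Hence the convolution contributes at most $\tfrac12\cdot 16 C^2 R^{n+1}/(n+1)^2 = 8C^2R^{n+1}/(n+1)^2$. For the extra term, when $4\mid n$ we have $\tfrac12|a_{n/4}^{\sigma(s)}|\le \tfrac12\cdot C R^{n/4}/(n/4)^2$, and since $R>1$ and $n/4\ge 1$ for $n\ge 4$ (the case $n=0$ already handled in the base step, giving $a_1$), this is crudely $\le \tfrac12 C R^{n+1}/(n+1)^2$ once one checks $R^{n/4}(n+1)^2 \le R^{n+1}(n/4)^2$, which holds comfortably for $R=10$. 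Adding up: $|a_{n+1}^s|\le \bigl(8C^2 + \tfrac12 C\bigr)R^{n+1}/(n+1)^2$, and we need $8C^2+\tfrac12 C\le CR$, i.e. $8C + \tfrac12 \le R$; with $C=1/20$ and $R=10$ this reads $0.4+0.5=0.9\le 10$, which is true with enormous room to spare.

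Once the coefficient bound $|a_n^s|\le CR^n/n^2$ is established, convergence of $\varphi_s(x,y)=x+\sum_{n\ge0}a_n^s y^{2+4n}$ on $\{|y|<1/10\}$ is immediate: the series is dominated termwise by $|x| + \sum_{n\ge1}C R^n|y|^{4n+2}/n^2 + |y|^2$, and for $|y|<1/10 = R^{-1/4}\cdot(\text{something} \le 1)$... more simply, $R|y|^4 < R\cdot(1/10)^4 = 10^{-3} < 1$, so the series $\sum CR^n|y|^{4n+2}/n^2$ converges (even absolutely and uniformly on compact subsets of $\{|y|<1/10\}$, with no constraint on $x$ since $x$ appears only linearly). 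I should be slightly careful that the case $n=1$ of the induction hypothesis is genuinely the base case and that the convolution estimate only uses indices $\le n$ (it does, since $i,j\ge1$ and $i+j=n+1$ force $i,j\le n$); and that the index $n/4$ in the extra term satisfies $1\le n/4\le n$ so the inductive hypothesis applies there too. I expect the only mildly delicate point to be choosing the split and the constant $K$ in the self-convolution estimate so that the final inequality $8C+\tfrac12\le R$ (or whatever slightly different numbers a different $K$ produces) comes out true; given how loose everything is, essentially any reasonable bookkeeping works, which is presumably why the author describes the estimates as "very crude."
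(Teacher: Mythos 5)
Your proof is correct and follows essentially the same route as the paper's: strong induction on $n$ with base case $|a_1^s|=\tfrac12=CR$, a self-convolution estimate $\sum_{i=1}^{n}1/(i^2(n+1-i)^2)\le K/(n+1)^2$ (the paper isolates this as a lemma with $K=20$; your splitting gives $K=16$), and a crude absorption of the extra term $a_{n/4}^{\sigma(s)}$ when $4\mid n$ using $R^{n/4}\ll R^{n+1}$. One slip in the last step: the condition you actually need is $8C^2+\tfrac12 C\le C$, i.e.\ $8C+\tfrac12\le 1$, not $\le CR$ resp.\ $\le R$, since the target bound $CR^{n+1}/(n+1)^2$ already carries the same factor $R^{n+1}$ as your estimate; with $C=1/20$ this reads $0.9\le 1$, so the proof still closes, but the true margin is much tighter than the ``$0.9\le 10$'' you quote.
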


To prove the proposition, we will make use of the following lemma.

\begin{lem}\label{lemma} Let $n\geq 1$ be an integer. Then \[\sum_{k=1}^n\frac{1}{k^2(n-k+1)^2}\leq \frac{20}{(n+1)^2}.\]
\end{lem}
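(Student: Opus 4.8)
The plan is to split the sum over $k$ into two symmetric halves and bound each half by pulling out the factor that is bounded below. Concretely, write $T_n := \sum_{k=1}^n \frac{1}{k^2(n-k+1)^2}$. Under the change of variable $k \mapsto n-k+1$ the summand is invariant, so I can reorganize the sum around its midpoint: for the terms with $k \le (n+1)/2$ we have $n-k+1 \ge (n+1)/2$, hence $\frac{1}{(n-k+1)^2} \le \frac{4}{(n+1)^2}$, and these terms contribute at most $\frac{4}{(n+1)^2}\sum_{k\ge 1}\frac{1}{k^2}$. The terms with $k > (n+1)/2$ are handled identically by symmetry. This gives the clean bound
\[
T_n \;\le\; \frac{8}{(n+1)^2}\sum_{k=1}^\infty \frac{1}{k^2} \;=\; \frac{8}{(n+1)^2}\cdot\frac{\pi^2}{6} \;=\; \frac{4\pi^2}{3(n+1)^2}.
\]
Since $\tfrac{4\pi^2}{3} = 13.15\ldots < 20$, this already yields the claimed inequality $T_n \le \tfrac{20}{(n+1)^2}$ with room to spare, so I would not even need a sharp constant — any elementary bound on $\sum 1/k^2$ (e.g. $\sum_{k\ge 1} 1/k^2 \le 1 + \sum_{k\ge 2}\frac{1}{k(k-1)} = 2$, giving constant $16 < 20$) suffices.

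One small bookkeeping point to get right: when $n$ is odd the midpoint index $k = (n+1)/2$ is an integer and gets counted in both halves if I am careless, so I would either assign it to exactly one half or simply note that double-counting one nonnegative term only weakens the upper bound, which is harmless here. When $n$ is even the split at $(n+1)/2$ lands between two integers and no term is shared. In either case the displayed inequality goes through verbatim.

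I expect no real obstacle here — the only "decision" is choosing the splitting point and a convenient explicit bound for $\zeta(2)$, and the generous slack between $16$ (or $13.2$) and $20$ means there is nothing delicate. The constant $20$ in the statement is clearly chosen to make Proposition 2.2's subsequent estimates clean rather than to be tight, so the proof should be a few lines.
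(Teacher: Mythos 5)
Your argument is correct and is essentially the paper's own proof: both exploit the symmetry $k\mapsto n-k+1$ to restrict to $k\le\lfloor(n+1)/2\rfloor$, bound $(n-k+1)^{-2}\le 4/(n+1)^2$ on that range, and conclude with $8\zeta(2)=4\pi^2/3<20$. Your remark about the midpoint term being harmlessly double-counted for odd $n$ is the only bookkeeping point, and you handle it correctly.
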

\begin{proof} The symmetry in the terms of the left hand sum implies that \[
\sum_{k=1}^n\frac{1}{k^2(n-k+1)^2}\leq 2\sum_{k=1}^{\lfloor\frac{n+1}{2}\rfloor} \frac{1}{k^2(n-k+1)^2}.\] Multiplying both sides of this inequality by $(n+1)^2$ yields \[\sum_{k=1}^n\frac{(n+1)^2}{k^2(n-k+1)^2}\leq 2\sum_{k=1}^{\lfloor\frac{n+1}{2}\rfloor} \frac{(n+1)^2}{k^2(n-k+1)^2} = 2\sum_{k=1}^{\lfloor\frac{n+1}{2}\rfloor}\frac{1}{k^2(1 - \frac{k}{n+1})^2}\leq 8\sum_{k=1}^{\lfloor\frac{n+1}{2}\rfloor}\frac{1}{k^2}<\frac{8\pi^2}{6}.\] Since $8\pi^2/6<20$, the proof is complete.
\end{proof}

\begin{proof}[Proof of Proposition \ref{analyticity}] We will prove the proposition by induction on $n\geq 1$. When $n = 1$, the recursion formula (\ref{recursion}) gives $a_1^s = -a_0^{\sigma(s)}/2a_0^s =\pm\frac{1}{2}$ for each $s\in S$, and hence $|a_1^s| = \frac{1}{2} = CR$, establishing the base case of the induction. Now assume that the proposition holds for $a_k^s$ when $k\leq n$. If $4\nmid n$, then (\ref{recursion}), the triangle inequality, and \hyperref[lemma]{Lemma~\ref*{lemma}} give that\[
|a_{n+1}^s|\leq \frac{1}{2} \sum_{k=1}^n\frac{C^2R^{n+1}}{k^2(n-k+1)^2}\leq \frac{20C^2R^{n+1}}{2(n+1)^2} = \frac{CR^{n+1}}{2(n+1)^2}<\frac{CR^{n+1}}{(n+1)^2},\] establishing the proposition in this case. If $4\mid n$, then (\ref{recursion}) gives \begin{equation}\label{eqn4}|a_{n+1}^s|\leq \frac{CR^{n/4}}{2(n/4)^2} + \frac{1}{2}\sum_{k=1}^n\frac{C^2R^{n+1}}{k^2(n-k+1)^2}\leq \frac{CR^{n/4}}{2(n/4)^2} + \frac{CR^{n+1}}{2(n+1)^2}.\end{equation}Since $4\mid n$, and in particular $n\geq 4$, the inequality $\frac{n}{4}\leq (n+1) - 4$ is valid, and hence \[\frac{CR^{n/4}}{2(n/4)^2} = \frac{8CR^{n/4}}{n^2}\leq \frac{8CR^{n+1}}{n^2R^4}<\frac{8CR^{n+1}}{(n+1)^2R^4}.\] Putting this estimate into (\ref{eqn4}), we see that \[
|a_{n+1}^s|\leq \left(\frac{8}{R^4} + \frac{1}{2}\right)\frac{CR^{n+1}}{(n+1)^2}<\frac{CR^{n+1}}{(n+1)^2}.\] This completes the proof.
\end{proof}

We have thus shown that the curves $C_s$ are holomorphic. On the other hand, it should be noted that they cannot all be algebraic. This is because if $C$ and $D$ are algebraic plane curves passing through $0$, then, as a simple consequence of Bezout's theorem,  the local intersection multiplicities $C\cdot f^n_*D$ grow subexponentially in $n$. We are then led to the following interesting, but possibly difficult, questions.

\begin{Quest} Which, if any, of the curves $C_s$ just constructed are (local irreducible components of) germs of algebraic curves? More specifically, if a sequence $s\in S$ is not eventually periodic, is it possible for $C_s$ to be a germ of an algebraic curve?
\end{Quest}

\section{Realization as a Cantor bouquet}

In this section, we reconstruct the curves $C_s$ from \S1 as a \emph{Cantor bouquet} of holomorphic stable manifolds using a geometric procedure  given first by Yamagishi in \cite{MR1808626}, see also \cite{Yam2, MR2195140, MR2307152, Tomoko, MR2629648}. We only sketch this construction here; for details see  \cite[\S2]{MR1808626}.

Let $\pi\colon X\to \C^2$ denote the blowup of the origin in $\C^2$, and let $E$ denote the exceptional divisor of $\pi$. It is easy to check that the lift $f_X\colon X\dashrightarrow X$ of $f$ has exactly one indeterminacy point $q$, given by $z = w = 0$ in the local coordinates $z = x/y$ and $w = y$. 

Now let $\pi'\colon X'\to X$ denote the blowup of the point $q$. It is a straightforward computation to check that  $f$ lifts to an (everywhere defined) holomorphic map $F\colon X'\to X$. Moreover, with respect to the local coordinates $z,w$ on $X$ and $u = z/w$, $v = w$ on $X'$, the map $F$ is given simply as $F(u,v) = (u^2 - 1, v^4)$. The preimage $F^{-1}(q)$ then consists of two points, $q_0 = (1,0)$ and $q_1 = (-1,0)$. Because $F$ is not a local biholomorphism near either $q_0$ or $q_1$, we are not in an identical situation to the one considered in \cite[\S2]{MR1808626}, but nonetheless the map $(\pi')^{-1}\circ F$, defined away from the $q_i$, exhibits similar local dynamics around the $q_i$ as the maps studied in \cite[\S2]{MR1808626}; specifically, $(\pi')^{-1}\circ F$ is contracting in the $v$-direction and expanding in the $u$-direction near the $q_i$. It is this behavior that allows us to consider holomorphic stable manifolds of $q$. Let $U$ be a small neighborhood of $q$ in $X$, so that $F^{-1}(U)$ is a disjoint union $U_0\sqcup U_1$ of neighborhoods of $q_0$ and $q_1$, respectively. Let \begin{align*}
B & = \{p\in X\smallsetminus E : f_X^n(p)\in U \mbox{ for all } n\geq 0\}\\
& = \{p\in X\smallsetminus E : f_X^n(p)\in \pi'(U_0\sqcup U_1)\mbox{ for all }n\geq0\}.
\end{align*} Near $q$, the set $B$ will be the union of the (strict transforms in $X$ of the) curves $C_s$ constructed in \S1. To be precise, if $s\in S := \{0,1\}^\N$, then the set $\wt{C}_s := \{p\in X\smallsetminus E : f_X^n(p)\in \pi'(U_{s_n})$ for all $n\geq 0\}$ is a curve transverse to $E$ at $q$, and the family $\{\wt{C}_s\}_{s\in S}$ consists of the strict transforms in $X$ of the curves $C_s$ from \S1. The $\wt{C}_s$ are \emph{local stable manifolds} of $q$ in the sense that they form an invariant family for $f_X$, and $f_X^n(p)\to q$ as $n\to\infty$ for all $p\in B = \bigcup_s \wt{C}_s$ near $q$. Moreover, it is clear from this construction that $f(\wt{C}_s) = \wt{C}_{\sigma(s)}$, where $\sigma\colon S\to S$ is the left shift map.

It is also possible to use the geometry in this construction to compute the local intersection multiplicites of the curves $C_s$ at the origin, rederiving \hyperref[formalprop]{Proposition~\ref*{formalprop}(2)}. To see how, first observe that because the $\wt{C}_s$ are transverse to $E$ at $q$, the projection formula gives \[
C_s\cdot C_t = (\pi^*C_s \cdot \pi^*C_t) = (\wt{C}_s + E)\cdot (\wt{C}_t + E) = \wt{C}_s\cdot \wt{C}_t + 1.\] Similarly, if $D_s$ denotes the strict transform of $C_s$ in $X'$, then $\wt{C}_s\cdot \wt{C}_t = D_s\cdot D_t + 1$, and thus $C_s\cdot C_t = D_s\cdot D_t + 2$. If $s_0\neq t_0$, then the germs $D_s$ and $D_t$ lie in different open sets $U_0$ and $U_1$, and hence do not intersect, proving that $C_s\cdot C_t = 2$, as previously derived. Suppose, on the other hand, that $s_0 = t_0$, say without loss of generality $s_0 = t_0 = 0$. If $F_0$ denotes the restriction $F_0 = F|_{U_0}$, then $D_s = F_0^*\wt{C}_{\sigma(s)}$ and $D_t = F_0^*\wt{C}_{\sigma(t)}$. Because $F_0$ has local topological degree $4$ at $q_0$, it follows that \begin{align*}
C_s\cdot C_t & = 2 + D_s\cdot D_t = 2 + F_0^*\wt{C}_{\sigma(s)}\cdot F_0^*\wt{C}_{\sigma(t)} = 2 + 4(\wt{C}_{\sigma(s)}\cdot \wt{C}_{\sigma(t)})\\
& = 4(C_{\sigma(s)}\cdot C_{\sigma(t)}) - 2\end{align*} when $s_0= t_0$. Using this identity and the fact that $C_s\cdot C_t = 2$ when $s_0\neq t_0$, one easily rederives \hyperref[formalprop]{Proposition~\ref*{formalprop}(2)}.

Finally, it is worth pointing out that this  argument applies equally well to any other superattracting germ with similar geometry. For instance, using either the methods in this section or in \S1, one can show that the maps $f(x,y) = (x^p - y^q, y^r)$ where $2\leq p < r\leq q$ all have Cantor bouquets of curves.

\section{Mixed multiplicities and the proof of Theorem B}

In this section, we give a proof of \hyperref[thmB]{Theorem~\ref*{thmB}} using Teissier's theory of \emph{mixed multiplicities}. In fact, we will prove a slightly stronger theorem, namely \hyperref[thmC]{Theorem~\ref*{thmC}} below. For ease of notation, let $R$ denote the formal power series ring $\C\llbracket x_1,\ldots, x_d\rrbracket$, where  $d\geq 2$ is a fixed integer. Recall that $R$ is a local ring with maximal ideal $\mf{m} = (x_1,\ldots, x_d)$. An ideal $\mf{a}$ of $R$ is said to be $\mf{m}$-primary if $\mf{a}$ contains some power of $\mf{m}$, or equivalently if $\mf{a}$ defines the origin $0\in \C^d$.

\begin{thm}\label{thmC} Let $f\colon (\C^d,0)\to (\C^d,0)$ be a holomorphic fixed point germ at the origin $0\in \C^d$ that is finite-to-one near $0$. Let $\mf{a}_1,\ldots, \mf{a}_d$ be $\mf{m}$-primary ideals of $R$. Choose generators $\psi_1^{(i)},\ldots, \psi_{m_i}^{(i)}$ for each of the ideals $\mf{a}_i$. For every point $z\in \C^{m_i}$, let $D_{z}^{(i)}$ denote the formal hypersurface germ $D_{z}^{(i)} = \{z_{1}\psi_1^{(i)} + \cdots + z_{m_i}\psi_{m_i}^{(i)} = 0\}$ through the origin. Fix an integer $k\in \{1,\ldots, d-1\}$. For each point $a = (a_1,\ldots, a_k)\in \C^{m_1}\times\cdots\times \C^{m_k}$ and $b = (b_1,\ldots, b_{d-k})\in \C^{m_{k+1}}\times\cdots\times \C^{m_d}$, let $V_a$ and $W_b$ be the local intersection cycles $V_a := D_{a_1}^{(1)}\cdot\ldots\cdot D_{a_k}^{(k)}$ and $W_b := D_{b_1}^{(k+1)}\cdot\ldots \cdot D_{b_{d-k}}^{(d)}$. Then there is a dense subset $U\subseteq (\C^{m_1}\times\cdots\times \C^{m_k})\times(\C^{m_{k+1}}\times\cdots\times \C^{m_d})$, given as the complement of a countable union of algebraic subsets, such that for all $(a,b)\in U$, the cycles $V_a$ and $W_b$ are of codimension $k$ and $d - k$, respectively, and the sequence of local intersection multiplicities $\mu(n) = V_a\cdot f^n_*W_b$ grows subexponentially.
\end{thm}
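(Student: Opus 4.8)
The plan is to reduce $\mu(n)$ to the colength of an ideal, to identify that colength — for a dense set of parameters — with a mixed multiplicity of $\mf m$-primary ideals, and then to bound that mixed multiplicity by crude ideal containments. First I would set up notation. Write $f^n = (f^n_1,\dots,f^n_d)$ for the components of the $n$-th iterate, and for an $\mf m$-primary ideal $\mf a$ of $R$ let $\mf a^{[n]}$ denote the ideal of $R$ generated by $\{\varphi\circ f^n : \varphi\in\mf a\}$, so that $\mf a_i^{[n]}$ is generated by $\psi^{(i)}_1\circ f^n,\dots,\psi^{(i)}_{m_i}\circ f^n$. Since $f$ is finite-to-one, the germ of $(f^n)^{-1}(0)$ at the origin is $\{0\}$, so each $\mf a^{[n]}$ is again $\mf m$-primary; in particular $\mf m_n := \mf m^{[n]} = (f^n_1,\dots,f^n_d)$ satisfies $\dim_\C R/\mf m_n = e_0(f^n) = e_0(f)^n$, where $e_0(f)$ is the local topological degree of $f$ at the origin (multiplicativity of the local degree under composition). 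Setting $g_i := \sum_j (a_i)_j\psi^{(i)}_j$ and $h_i := \sum_j (b_i)_j\psi^{(k+i)}_j$, the cycle $V_a$ is cut out by $g_1,\dots,g_k$ and $W_b$ by $h_1,\dots,h_{d-k}$ whenever the parameters are general enough that these form systems of parameters for the respective subvarieties. Because $f^n$ is finite, the projection formula gives
\[
\mu(n) \;=\; V_a\cdot f^n_*W_b \;=\; (f^n)^*V_a\cdot W_b,
\]
both sides being the degree of a zero-cycle supported at the origin, which pushforward preserves; and when the parameters are general enough that $(f^n)^*V_a$ and $W_b$ intersect properly at the origin, this equals
\[
\mu(n) \;=\; \dim_\C R/\big(g_1\circ f^n,\dots,g_k\circ f^n,\; h_1,\dots,h_{d-k}\big).
\]
Note that $g_i\circ f^n = \sum_j (a_i)_j(\psi^{(i)}_j\circ f^n)$ is the linear combination, with coefficient vector $a_i$, of the chosen generators of $\mf a_i^{[n]}$.

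Next I would invoke Teissier's theory of mixed multiplicities. For any $\mf m$-primary ideals $\mf b_1,\dots,\mf b_d$ of $R$ with fixed finite generating sets, the locus of coefficient tuples for which a general $\C$-linear combination of the generators of each $\mf b_i$ both forms a system of parameters and has colength equal to the mixed multiplicity $e(\mf b_1,\dots,\mf b_d)$ is the complement of a proper algebraic subset of the coefficient space. I would apply this to the $d$-tuple $(\mf a_1^{[n]},\dots,\mf a_k^{[n]},\mf a_{k+1},\dots,\mf a_d)$, whose chosen generators are linearly parametrized by precisely the coordinates $(a,b)$; this yields a dense Zariski-open $U_n\subseteq (\C^{m_1}\times\cdots\times\C^{m_k})\times(\C^{m_{k+1}}\times\cdots\times\C^{m_d})$ on which $V_a$ and $W_b$ have codimensions $k$ and $d-k$, the intersection above is proper, and $\mu(n) = e(\mf a_1^{[n]},\dots,\mf a_k^{[n]},\mf a_{k+1},\dots,\mf a_d)$. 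Setting $U := \bigcap_{n\ge 0}U_n$, the complement of $U$ is a countable union of proper algebraic subsets, so $U$ is dense by the Baire category theorem, and for $(a,b)\in U$ the identification of $\mu(n)$ with the mixed multiplicity holds for every $n$ simultaneously.

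It then remains to bound $e(\mf a_1^{[n]},\dots,\mf a_k^{[n]},\mf a_{k+1},\dots,\mf a_d)$. Fix $c\ge 1$ with $\mf m^c\subseteq\mf a_i$ for all $i$; extending along $(f^n)^*$ gives $\mf a_i^{[n]}\supseteq\mf m_n^{c}$. Since a local ring modulo an ideal of colength $\ell$ is annihilated by $\mf m^\ell$ and $\dim_\C R/\mf m_n = e_0^n$ (write $e_0 := e_0(f)$), we get $\mf m^{e_0^n}\subseteq\mf m_n$, hence $\mf a_i^{[n]}\supseteq\mf m^{c e_0^n}$ for each $i\le k$. Using monotonicity of mixed multiplicities under inclusion of ideals together with their homogeneity $e(\mf m^r,\,\cdot\,,\dots,\,\cdot\,) = r\,e(\mf m,\,\cdot\,,\dots,\,\cdot\,)$,
\begin{align*}
\mu(n) &\;\le\; e\big(\mf m^{c e_0^n},\dots,\mf m^{c e_0^n},\mf a_{k+1},\dots,\mf a_d\big)\\
&\;=\; (c e_0^n)^k\, e\big(\mf m,\dots,\mf m,\mf a_{k+1},\dots,\mf a_d\big)
\end{align*}
(with $k$ copies of $\mf m$), so $\mu(n)\le AB^n$ with $A := c^k\, e(\mf m,\dots,\mf m,\mf a_{k+1},\dots,\mf a_d)$ and $B := e_0^k\ge 1$. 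This is exactly the growth asserted by the theorem (cf.\ the definition of subexponential growth in \hyperref[thmB]{Theorem~\ref*{thmB}}), and \hyperref[thmB]{Theorem~\ref*{thmB}} itself is the special case $m_i = m$, $\psi^{(i)}_j = \psi_j$ of this argument.

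The one real point requiring care — and it is expository rather than mathematical — is pinning down the precise version of Teissier's theorem used in the middle step: that for a family of $\mf m$-primary ideals, general linear combinations of their generators simultaneously form systems of parameters and compute the mixed multiplicity, with the exceptional locus algebraic, so that the countably many bad loci (one per iterate) really do assemble into a countable union of algebraic subsets leaving a dense set of good parameters. The reduction via the projection formula also tacitly needs $(f^n)^*V_a$ and $W_b$ to intersect properly at the origin, but that is absorbed into the same genericity; everything else is standard commutative algebra.
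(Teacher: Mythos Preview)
Your proof is correct and follows essentially the same architecture as the paper's: reduce $\mu(n)$ via the projection formula, invoke Teissier's theorem to identify it (for generic parameters) with the mixed multiplicity $e(f^{n*}\mf{a}_1;\ldots;f^{n*}\mf{a}_k;\mf{a}_{k+1};\ldots;\mf{a}_d)$, intersect the resulting Zariski opens over $n$, and then bound the mixed multiplicity by ideal containments into powers of $\mf{m}$.

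The one substantive difference lies in the final bound. The paper applies Teissier's inequality $e(\mf{b}_1;\ldots;\mf{b}_d)\le\prod_i e(\mf{b}_i)^{1/d}$ to reduce to ordinary Samuel multiplicities and then bounds each $e(f^{n*}\mf{a}_i)$ via the chain $f^{n*}\mf{a}_i\supseteq\mf{m}^{s^nr}$, where $s$ is chosen so that $\mf{m}^s\subseteq f^*\mf{m}$. You instead bypass Teissier's inequality entirely, using only monotonicity and homogeneity of mixed multiplicities together with the containment $\mf{m}^{e_0^n}\subseteq f^{n*}\mf{m}$ (obtained from $\dim_\C R/f^{n*}\mf{m}=e_0^n$ and Nakayama). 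Your route is slightly more elementary in that it avoids the Alexandrov--Fenchel-type inequality; the paper's route gives a marginally sharper exponential base (one always has $s\le e_0$), though this is irrelevant for the theorem as stated.
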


Observe that \hyperref[thmC]{Theorem~\ref*{thmC}} implies \hyperref[thmB]{Theorem~\ref*{thmB}} by simply taking each of the ideals $\mf{a}_i$ of \hyperref[thmC]{Theorem~\ref*{thmC}} to be the same ideal $(\psi_1,\ldots, \psi_m)$.

Before beginning the proof of \hyperref[thmC]{Theorem~\ref*{thmC}}, we recall some basic facts from the theory of mixed multiplicities developed by B.\ Teissier in the 70s \cite{MR0374482, MR0467800, MR645731, MR708342, MR518229}. A concise and clear overview to the topic that suffices for our purposes can be found in \cite[\S1.6.8]{MR2095471}. For us, the relevant results are the following.

\begin{thm}[Teissier]\label{teissier1} Let $\mf{b}_1,\ldots, \mf{b}_d$ be $\mf{m}$-primary ideals of $R$. Fix generators $\varphi_1^{(i)},\ldots, \varphi_{m_i}^{(i)}$ of each of the ideals $\mf{b}_i$. For a point $z\in \C^{m_i}$, let $D_{z}^{(i)} = \{z_{1}\varphi_1^{(i)} + \cdots + z_{m_i}\varphi_{m_i}^{(i)} = 0\}$. Then there is an integer $e(\mf{b}_1;\cdots; \mf{b}_d)\geq 1$ and a nonempty Zariski open subset $U\subseteq \C^{m_1}\times\cdots\times \C^{m_d}$ such that if $ (a_1,\ldots, a_d)\in U$, then the hypersurface germs $D_{a_1}^{(1)},\ldots, D_{a_d}^{(d)}$ intersect properly at the origin, and $e(\mf{b}_1; \cdots; \mf{b}_d)$ is exactly the local intersection multiplicity $ D_{a_1}^{(1)}\cdot D_{a_2}^{(2)}\cdot\ldots\cdot D_{a_d}^{(d)}$. Moreover, one has the inequality \[
e(\mf{b}_1; \cdots; \mf{b}_d) \leq e(\mf{b}_1)^{1/d}\cdots e(\mf{b}_d)^{1/d},\] where here $e(\mf{b}_i)$ denotes the standard Samuel multiplicity of $\mf{b}_i$, that is \[
e(\mf{b}_i) := \lim_{n\to \infty} \frac{d!}{n^d}\,\mathrm{length}_R(R/\mf{a}^{n+1}) \in \N.\] The integer $e(\mf{b}_1;\cdots; \mf{b}_d)$ is called the \emph{mixed multiplicity} of the ideals $\mf{b}_i$.
\end{thm}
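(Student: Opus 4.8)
Since Theorem~\ref{teissier1} is a classical result of Teissier, in practice one would simply cite it (e.g.\ \cite[\S1.6.8]{MR2095471}); but were one to reconstruct a proof, the plan would be to separate it into two logically independent parts --- first the identification of the generic intersection number with an intrinsic algebraic invariant, and then the inequality bounding that invariant --- and to treat them in that order.

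For the first part I would begin by attaching an invariant to the tuple $(\mf{b}_1,\ldots,\mf{b}_d)$ directly. Consider the function
\[
H(n_1,\ldots,n_d) \;=\; \mathrm{length}_R\bigl(R/\mf{b}_1^{\,n_1}\cdots\mf{b}_d^{\,n_d}\bigr)
\]
of $d$ nonnegative integer variables. By the multigraded version of the Hilbert--Samuel theorem (Bhattacharya; see \cite[\S1.6.8]{MR2095471}), $H$ agrees for $n_1,\ldots,n_d\gg 0$ with a polynomial of total degree $d$ whose homogeneous top-degree part has the form $\sum_{|\alpha| = d}\tfrac{1}{\alpha!}\,e_\alpha\,n^\alpha$ with every $e_\alpha$ a nonnegative integer; one then \emph{defines} the mixed multiplicity to be $e(\mf{b}_1;\cdots;\mf{b}_d) := e_{(1,\ldots,1)}$, the coefficient of $n_1\cdots n_d$. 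That this integer is positive, and that it specializes to the Samuel multiplicity $e(\mf{b})$ when all $\mf{b}_i = \mf{b}$, is immediate from the construction together with $\dim R = d$.

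Next I would identify this invariant with the generic intersection number. The crucial input is that for a \emph{general} choice of coefficient vectors $a_i\in\C^{m_i}$ the elements $g_i = a_{i,1}\varphi_1^{(i)} + \cdots + a_{i,m_i}\varphi_{m_i}^{(i)}\in\mf{b}_i$ form a \emph{superficial sequence} for the ideals $\mf{b}_1,\ldots,\mf{b}_d$, hence in particular a system of parameters of $R$ --- the classical fact that general elements of an $\mf{m}$-primary ideal are superficial (valid since the residue field is infinite), which is a nonempty Zariski-open condition on $(a_1,\ldots,a_d)$; the set $U$ of the theorem is the intersection of these conditions over $i$. For such $a$, the germs $D_{a_i}^{(i)} = \{g_i = 0\}$ meet only at the origin, hence properly, with intersection multiplicity $\dim_\C R/(g_1,\ldots,g_d)$. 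That this length equals $e(\mf{b}_1;\cdots;\mf{b}_d)$ I would prove by induction on $d$: one slices off a superficial element $g_d$ at a time, passing to the $(d-1)$-dimensional ring $R/(g_d)$ --- still Cohen--Macaulay, being a quotient of a regular ring by a regular sequence --- and invokes the reduction formula expressing $e(\mf{b}_1;\cdots;\mf{b}_d)$ as the mixed multiplicity of the images of $\mf{b}_1,\ldots,\mf{b}_{d-1}$ in $R/(g_d)$ (a superficial element of $\mf{b}_d$ ``uses up'' exactly one slot); the base case $d = 1$ is the classical identity $\dim_\C R/(g) = e\bigl((g)\bigr)$ for $g$ a parameter in a one-dimensional Cohen--Macaulay ring.

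For the inequality $e(\mf{b}_1;\cdots;\mf{b}_d)\le e(\mf{b}_1)^{1/d}\cdots e(\mf{b}_d)^{1/d}$ the plan is to deduce it from the two-ideal log-convexity inequality
\[
e\bigl(\mf{a}^{[i]},\mf{b}^{[d-i]}\bigr)^2 \;\le\; e\bigl(\mf{a}^{[i-1]},\mf{b}^{[d-i+1]}\bigr)\,e\bigl(\mf{a}^{[i+1]},\mf{b}^{[d-i-1]}\bigr),
\]
where $\mf{a}^{[j]}$ denotes the entry $\mf{a}$ repeated $j$ times: applying this repeatedly to pairs of consecutive slots in $e(\mf{b}_1;\cdots;\mf{b}_d)$ yields the product bound by an entirely formal manipulation (the same one through which the Alexandrov--Fenchel inequality gives the analogous bound for mixed volumes, but with the inequality reversed). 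This log-convexity inequality is the one genuinely non-elementary ingredient, and the step I expect to be the main obstacle: by repeatedly cutting with general elements of $\mf{a}$ and of $\mf{b}$ --- a Bertini-type reduction that lowers $d$ while tracking how the mixed multiplicities descend --- it reduces to the case $d = 2$, which is the Hodge-index-type fact that the intersection form on the exceptional curves of a resolution of a normal surface singularity is negative definite; applied on a modification of $\operatorname{Spec} R$ on which $\mf{a}$ and $\mf{b}$ both become locally principal, this yields $e(\mf{a};\mf{b})^2\le e(\mf{a})e(\mf{b})$. Everything else is bookkeeping in multigraded commutative algebra; for the paper one would of course simply invoke Teissier's work and \cite[\S1.6.8]{MR2095471}.
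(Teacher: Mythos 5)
This is a quoted theorem: the paper offers no proof of it at all, but simply imports it from Teissier's work via \cite[\S1.6.8]{MR2095471}, so there is no argument of the author's to compare yours against. Your reconstruction is the standard one (Bhattacharya's multigraded Hilbert polynomial to define $e(\mf{b}_1;\cdots;\mf{b}_d)$, superficial/general elements to identify it with the generic colength $\dim_\C R/(g_1,\ldots,g_d)$, and the Khovanskii--Teissier log-convexity to get the product bound), and the first two parts are essentially sound as sketched. One genuine imprecision in the last part: the inequality you actually write down, $e(\mf{a}^{[i]},\mf{b}^{[d-i]})^2 \leq e(\mf{a}^{[i-1]},\mf{b}^{[d-i+1]})\,e(\mf{a}^{[i+1]},\mf{b}^{[d-i-1]})$, involves only \emph{two} distinct ideals, and it alone cannot produce the bound for $d$ pairwise distinct ideals --- for $d=3$ the quantity $e(\mf{b}_1;\mf{b}_2;\mf{b}_3)$ never even appears in any instance of that two-ideal statement. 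The formal Alexandrov--Fenchel-style induction you allude to requires the stronger form $e(\mf{b}_1;\mf{b}_2;\mf{c}_3;\cdots;\mf{c}_d)^2 \leq e(\mf{b}_1;\mf{b}_1;\mf{c}_3;\cdots;\mf{c}_d)\,e(\mf{b}_2;\mf{b}_2;\mf{c}_3;\cdots;\mf{c}_d)$ with arbitrary fixed ideals in the $d-2$ spectator slots; that is what Teissier (and Rees--Sharp, in general local rings) actually prove, again by cutting with general elements down to $d=2$ and invoking the Hodge-index argument you describe. With that substitution your outline is a faithful reconstruction of the classical proof; for the purposes of this paper, the citation suffices.
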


With these facts at our disposal, we can now prove \hyperref[thmC]{Theorem~\ref*{thmC}}.

\begin{proof}[Proof of {\hyperref[thmC]{Theorem~\ref*{thmC}}}] The projection formula says precisely that \begin{equation}\label{proj}
\mu(n) = V_a\cdot f^n_*W_b = f^{n*}D_{a_1}^{(1)}\cdot \ldots \cdot f^{n*}D_{a_k}^{(k)}\cdot D_{b_1}^{(k+1)}\cdot\ldots\cdot D_{b_{d-k}}^{(d)}.\end{equation} By \hyperref[teissier1]{Theorem~\ref*{teissier1}}, for each  $n$ there is a nonempty Zariski open subset $U_n\subseteq (\C^{m_1}\times\cdots\times \C^{m_k})\times(\C^{m_{k+1}}\times\cdots\times \C^{m_d})$ such that if $(a,b)\in U_n$, then the right hand side of equation (\ref{proj}) is exactly the mixed multiplicity $e(f^{n*}\mf{a}_1; \cdots; f^{n*}\mf{a}_k; \mf{a}_{k+1}; \cdots; \mf{a}_d)$, where $f^{n*}\mf{a}_i$ is the ideal $(\psi_1^{(i)}\circ f^n, \ldots, \psi_{m_i}^{(i)}\circ f^n)$. We point out that $f^{n*}\mf{a}_i$ is an $\mf{m}$-primary ideal by our assumption that $f$ is finite-to-one near $0$. Let $U = \bigcap_n U_n$. For $(a,b)\in U$, this proves that $\mu(n) = e(f^{n*}\mf{a}_1; \cdots; f^{n*}\mf{a}_k; \mf{a}_{k+1}; \cdots; \mf{a}_d)$ for all $n$.

The problem is now reduced to showing that the sequence $e(f^{n*}\mf{a}_1; \cdots; f^{n*}\mf{a}_k; \mf{a}_{k+1}; \cdots; \mf{a}_d)$ grows subexponentially. Again using \hyperref[teissier1]{Theorem~\ref*{teissier1}}, we see \[
e(f^{n*}\mf{a}_1; \cdots; f^{n*}\mf{a}_k; \mf{a}_{k+1}; \cdots; \mf{a}_d)\leq e(f^{n*}\mf{a}_1)^{1/d}\cdots e(f^{n*}\mf{a}_k)^{1/d}e(\mf{a}_{k+1})^{1/d}\cdots e(\mf{a}_d)^{1/d},\] so it suffices to show that $e(f^{n*}\mf{a}_i)$ grows subexponentially. Let $r\geq 1$ be an integer such that $\mf{m}^r\subseteq \mf{a}_i$ for each $i$, and let $s\geq 1$ be an integer such that $\mf{m}^s\subseteq f^*\mf{m}$. Then one has inclusions\[
f^{n*}\mf{a}_i\supseteq f^{n*}\mf{m}^r\supseteq f^{(n-1)*}\mf{m}^{sr}\supseteq f^{(n-2)*}\mf{m}^{s^2r}\supseteq\cdots\supseteq \mf{m}^{s^nr}.\] It follows that \[
e(f^{n*}\mf{a}_i) \leq e(\mf{m}^{s^nr}) = (s^nr)^d e(\mf{m}) = r^ds^{dn},\] which grows subexponentially, completing the proof. 
\end{proof}

\begin{rem} With very little extra work, one can show that the exponential growth rate of the sequence $e(f^{n*}\mf{a}_i)^{1/d}$ can be bounded above in the following manner:\[ 
\lim_{n\to \infty} \frac{1}{n} \log e(f^{n*}\mf{a}_i)^{1/d} \leq \lim_{n\to \infty}\frac{1}{n} \log\min\{s\geq 1 : \mf{m}^s\subseteq f^{n*}\mf{m}\}.\] We mention this because quantities such as that on the right hand side have been recently studied by Majidi-Zolbanin, Miasnikov, and Szpiro \cite{MZMS}. In their notation, the right hand side of this inequality is exactly $w_h(f)$.
\end{rem}

\section{Valuative dynamics and the proof of Theorem C}

In this final section, we will prove \hyperref[thmD]{Theorem~\ref*{thmD}} using techniques from valuation theory and dynamics on valuation spaces. Let us begin by recalling the setup of the theorem. We fix a superattracting holomorphic fixed point germ $f\colon (\C^2,0)\to (\C^2,0)$, which we assume to be finite-to-one near $0$. Let $\mf{a} = (\psi_1,\ldots, \psi_m)$ be an $\mf{m}$-primary ideal in the formal power series ring $\C\llbracket x,y\rrbracket$. For any $z\in \C^m$, we set $D_z = \{z_1\psi_1 + \cdots + z_m\psi_m = 0\}$. We aim to show that there is a dense subset $U\subseteq \C^m\times \C^m$, given as the complement of a countable union of algebraic subsets, such that for all $(z,w)\in U$, the sequence $\mu(n) := D_z\cdot f^n_*D_w$ eventually satisfies an integral linear recursion relation. We have already seen in \S4 that we can find such a set $U$ for which one has $\mu(n) = e(f^{n*}\mf{a}; \mf{a})$ for all $(z,w)\in U$ and all $n\geq 1$. Our starting point in this section is the following alternate characterization of mixed multiplicities, which can be found in \cite[\S1.6.8]{MR2095471} and \cite{MR0354663}.

\begin{thm}\label{teissier2} Let $\mf{b}_1$ and $\mf{b}_2$ be two $\mf{m}$-primary ideals of $\C\llbracket x,y\rrbracket$. Let $\pi\colon X\to (\C^2,0)$ be a modification over $0$ which dominates the normalized blowup of each of the ideals $\mf{b}_i$. Then there exist divisors $Z_1$ and $Z_2$ of $X$, both supported within the exceptional locus $\pi^{-1}(0)$ of $\pi$, for which $\mf{b}_i\cdot \mc{O}_X = \mc{O}_X(Z_i)$. The mixed multiplicity $e(\mf{b}_1; \mf{b}_2)$ is given by the intersection number $-(Z_1\cdot Z_2)$. Finally, the divisors $Z_i$ are \emph{relatively nef}, which is to say that for every irreducible component $E$ of $\pi^{-1}(0)$, one has $Z_i\cdot E \geq 0$.
\end{thm}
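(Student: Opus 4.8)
The plan is to derive everything from Teissier's characterization of the mixed multiplicity as an intersection number of generic members (Theorem~\ref{teissier1}), together with elementary intersection theory on the surface $X$. \emph{Step 1: existence of the $Z_i$.} Since $\pi$ factors through the normalized blowup of $\mf{b}_i$, on which the total transform of $\mf{b}_i$ is invertible, the ideal sheaf $\mf{b}_i\cdot\mc{O}_X$ is itself invertible (invertibility of a pulled-back ideal sheaf persists under further modifications). Because $\mf{b}_i$ is $\mf{m}$-primary its cosupport is $\{0\}$, so $\mf{b}_i\cdot\mc{O}_X$ is the trivial ideal away from $\pi^{-1}(0)$, and we may therefore write $\mf{b}_i\cdot\mc{O}_X=\mc{O}_X(Z_i)$ for a (unique, anti-effective) divisor $Z_i\le 0$ supported on $\pi^{-1}(0)$.

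\emph{Step 2: relative nefness.} Choosing generators $\psi_1,\dots,\psi_r$ of $\mf{b}_i$ exhibits $\mc{O}_X(Z_i)=\mf{b}_i\cdot\mc{O}_X$ as the image of the map $\mc{O}_X^{\,r}\to\mc{O}_X$, $(h_j)_j\mapsto\sum_j h_j(\psi_j\circ\pi)$, so $\mc{O}_X(Z_i)$ is globally generated. A globally generated invertible sheaf has nonnegative degree on every proper curve, so $Z_i\cdot E=\deg\!\big(\mc{O}_X(Z_i)|_E\big)\ge 0$ for every irreducible component $E$ of $\pi^{-1}(0)$.

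\emph{Step 3: the intersection formula.} By Theorem~\ref{teissier1}, $e(\mf{b}_1;\mf{b}_2)$ is the local intersection multiplicity at $0$ of $D^{(1)}=\{g_1=0\}$ and $D^{(2)}=\{g_2=0\}$ for generic $g_i\in\mf{b}_i$, and since $\pi$ is an isomorphism off $0$ this equals the intersection number $\pi^{*}D^{(1)}\cdot\pi^{*}D^{(2)}$ computed on $X$. The identity $\mf{b}_i\cdot\mc{O}_X=\mc{O}_X(Z_i)$ gives $\pi^{*}D^{(i)}=\operatorname{div}(g_i\circ\pi)=\wt{D}^{(i)}-Z_i$, where $\wt{D}^{(i)}$ is the strict transform; because $\mf{b}_i\cdot\mc{O}_X$ is \emph{already} invertible on $X$, a generic $\wt{D}^{(i)}$ avoids any prescribed finite subset of $X$, so taking for that subset the finitely many points of $\wt{D}^{(2)}\cap\pi^{-1}(0)$ gives $\wt{D}^{(1)}\cap\wt{D}^{(2)}=\varnothing$ and hence $\wt{D}^{(1)}\cdot\wt{D}^{(2)}=0$. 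Finally, the projection formula gives $\pi^{*}D^{(i)}\cdot C=D^{(i)}\cdot\pi_{*}C=0$ for every curve $C$ contracted by $\pi$, so $\pi^{*}D^{(i)}\cdot Z_j=0$, i.e.\ $\wt{D}^{(i)}\cdot Z_j=Z_i\cdot Z_j$. Expanding
\[
e(\mf{b}_1;\mf{b}_2)=(\wt{D}^{(1)}-Z_1)\cdot(\wt{D}^{(2)}-Z_2)=\wt{D}^{(1)}\!\cdot\wt{D}^{(2)}-\wt{D}^{(1)}\!\cdot Z_2-Z_1\!\cdot\wt{D}^{(2)}+Z_1\!\cdot Z_2
\]
and substituting the three identities yields $e(\mf{b}_1;\mf{b}_2)=-(Z_1\cdot Z_2)$.

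The point I expect to require the most care — and would write out in full — is the reduction in Step 3 from the local intersection multiplicity at $0$ to the global intersection number of total transforms on $X$ for \emph{generic} members: one must check, using base-point-freeness of $\mf{b}_i\cdot\mc{O}_X$ on $X$, that generic $D^{(1)},D^{(2)}$ meet only at $0$ as germs, that their strict transforms meet $\pi^{-1}(0)$ away from all the finitely many relevant points, and that $\pi^{*}D^{(i)}=\wt{D}^{(i)}-Z_i$ holds with the \emph{same} $Z_i$ for every generic member. An alternative route that sidesteps generic members is polarization: first establish the single-ideal identity $e(\mf{b})=-(Z\cdot Z)$ (a classical fact, provable by the same pullback argument or cited from \cite[\S1.6.8]{MR2095471}), then apply it to $\mf{b}_1^{p}\mf{b}_2^{q}$, whose transform on $X$ is $\mc{O}_X(pZ_1+qZ_2)$, and compare the coefficient of $pq$ in $e(\mf{b}_1^{p}\mf{b}_2^{q})=p^{2}e(\mf{b}_1)+2pq\,e(\mf{b}_1;\mf{b}_2)+q^{2}e(\mf{b}_2)$ with that in $-(pZ_1+qZ_2)^{2}$.
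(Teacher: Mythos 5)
The paper does not prove this statement at all: it is quoted as a known result of Teissier/Rees and cited to \cite[\S1.6.8]{MR2095471} and \cite{MR0354663}, so there is no in-paper argument to compare against. Your proof is the standard one and, as far as I can see, it is correct: existence and anti-effectivity of the $Z_i$ from invertibility of $\mf{b}_i\cdot\mc{O}_X$ on any modification dominating the normalized blowup; relative nefness from global generation of $\mc{O}_X(Z_i)$ by the pulled-back generators; and the formula $e(\mf{b}_1;\mf{b}_2)=-(Z_1\cdot Z_2)$ by expanding $\pi^*D^{(1)}\cdot\pi^*D^{(2)}$ for generic members, killing $\wt{D}^{(1)}\cdot\wt{D}^{(2)}$ by base-point-freeness and using the projection formula to convert $\wt{D}^{(i)}\cdot Z_j$ into $Z_i\cdot Z_j$. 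You correctly identify the genuinely delicate point, namely arranging a single generic pair $(g_1,g_2)$ that simultaneously lies in the open set of \hyperref[teissier1]{Theorem~\ref*{teissier1}}, achieves the generic vanishing orders $\ord_E(g_i\circ\pi)$ on every exceptional component (so that $\pi^*D^{(i)}=\wt{D}^{(i)}-Z_i$ with the fixed $Z_i$), and has disjoint strict transforms over $0$; each is a dense open condition, so this is fine. Two small points worth making explicit if you write this up: (i) in the paper's convention $X$ is only assumed normal, not smooth, so you should note that all the intersection products you use are of a Cartier divisor against a curve or a principal divisor (the $Z_i$ are Cartier because $\mf{b}_i\cdot\mc{O}_X$ is invertible), the sole Weil--Weil product $\wt{D}^{(1)}\cdot\wt{D}^{(2)}$ being harmless since the divisors are disjoint; (ii) the sign convention, which you do handle, but state clearly that $-Z_i$ is the effective exceptional divisor cut out by $\mf{b}_i\cdot\mc{O}_X$. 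Your alternative polarization route via $e(\mf{b}_1^p\mf{b}_2^q)$ is also valid and is essentially how the result is derived in \cite[\S1.6.8]{MR2095471}.
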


Here, and for the rest of the article, a \emph{modification} $\pi\colon X\to (\C^2,0)$ over $0$ is defined to be a proper birational morphism $\pi\colon X\to \C^2$ from a normal variety $X$ to $\C^2$ that is an isomorphism over $\C^2\smallsetminus \{0\}$. Such a modification will be called a \emph{blowup}  if it is obtained as a composition of point blowups. If $\pi\colon X\to (\C^2,0)$ is any modification, and $E_1,\ldots, E_r$ denote the irreducible components of the exceptional locus $\pi^{-1}(0)$, we define $\Div(\pi)$ to be the vector space of $\R$-divisors $\Div(\pi) = \bigoplus_{i=1}^r \R E_i$. The intersection pairing on $\Div(\pi)$ is nondegenerate by  the Hodge index theorem  \cite[Theorem V.1.9]{MR0463157}, and thus there is a dual basis $\check{E}_1,\ldots, \check{E}_r\in \Div(\pi)$, i.e., a basis satisfying the relation $\check{E}_i\cdot E_j = \delta_{ij}$.

Before beginning the proof of \hyperref[thmD]{Theorem~\ref*{thmD}}, let us give a (very) brief overview of the \emph{valuative tree} $\mc{V}$ at the origin $0\in \C^2$ of Favre-Jonsson, which is the relevant valuation space for us. A full-on introduction to the valuative tree would take us too far afield here; detailed references can be found in \cite{MR2097722, Mattias}, and more concise introductions can be found in \cite{MR2339287, GR}. 


The valuative tree $\mc{V}$ at $0\in \C^2$ is defined as the set of all semivaluations $\nu\colon \C\llbracket x,y\rrbracket\to \R\cup\{+\infty\}$ with the properties that $\nu|_{\C^\times} \equiv 0$ and $\min\{\nu(x), \nu(y)\} = 1$. For us, the most important example is that of a \emph{divisorial valuation}. A valuation $\nu\in \mc{V}$ is divisorial if there is a blowup $\pi\colon X\to (\C^2,0)$, an irreducible component $E$ of the exceptional locus $\pi^{-1}(0)$, and a constant $\lambda\in \R$ such that $\nu(P) = \lambda\ord_E(P\circ \pi)$ for all $P\in \C\llbracket x,y\rrbracket$. In this case, the constant $\lambda$ is exactly $\lambda = b_E^{-1}$, where $b_E = \min\{\ord_E(x\circ \pi), \ord_E(y\circ \pi)\}\in \N$. The constant $b_E$ is sometimes called the \emph{generic multiplicity} of $E$. If $\nu$ is a divisorial valuation of the above form, we will denote it simply as $\nu_E$.

Suppose that $\pi\colon X\to (\C^2,0)$ is a blowup, and $E_1,\ldots, E_r$ are the irreducible components of $\pi^{-1}(0)$. Then any divisorial valuation $\nu\in \mc{V}$ defines a linear functional $\nu\colon\Div(\pi)\to \R$; essentially, $\nu(E_i)$ is the $\nu$-valuation of a local defining equation of $E_i$ at the \emph{center} of $\nu$ in $\pi$, see \cite[\S1.2]{favre:holoselfmapssingratsurf} for details. Since the intersection pairing is nondegenerate, it follows that there is a divisor $Z_{\nu, \pi}\in \Div(\pi)$ such that $\nu(D) = Z_{\nu, \pi}\cdot D$ for all $D\in \Div(\pi)$. If $\pi'\colon X'\to (\C^2, 0)$ is a blowup dominating $\pi$, say $\eta\colon X'\to X$ is such that $\pi' = \pi\eta$, then $Z_{\nu, \pi'} = \eta^*Z_{\nu, \pi}$ and $Z_{\nu, \pi} = \eta_*Z_{\nu, \pi'}$. Finally, if $\nu = b_{E_i}^{-1}\ord_{E_i}$ for one of the exceptional components $E_i$ of $\pi^{-1}(0)$, then one easily checks that $Z_{\nu, \pi} = b_{E_i}^{-1}\check{E}_i$.

The valuative tree $\mc{V}$ has a natural topology and a natural poset structure $(\mc{V}, \leq)$. With respect to these structures, $\mc{V}$ is a \emph{rooted tree} (see \cite[\S2]{Mattias} for a precise definition). For any two elements $\nu_1,\nu_2\in \mc{V}$, there is a unique greatest element $\nu_1\wedge \nu_2$ that is both $\leq \nu_1$ and $\leq \nu_2$. In addition, there is defined on $\mc{V}$ an increasing function $\alpha\colon \mc{V}\to [1,+\infty]$, called the \emph{skewness} function, which is finite on divisorial valuations and has the following geometric property: if $\pi\colon X\to (\C^2,0)$ is a blowup and $E_1$ and $E_2$ are two irreducible components of the exceptional locus $\pi^{-1}(0)$, then 
\begin{equation}\label{Beqn3}\alpha(\nu_{E_1}\wedge \nu_{E_2}) = -(Z_{\nu_{E_1}, \pi}\cdot Z_{\nu_{E_2}, \pi}).\end{equation}

Finally, we note that $f$ induces in a natural way a dynamical system $f_\bullet \colon \mc{V}\to \mc{V}$. Indeed, if $\nu\in \mc{V}$, then we obtain a semivaluation $f_*\nu$ defined by $(f_*\nu)(P) = \nu(P\circ f)$. In general the value $c(f,\nu):= \min\{(f_*\nu)(x), (f_*\nu)(y)\}$ is greater than $1$, so $f_*\nu$ is not an element of $\mc{V}$, but by appropriately normalizing we obtain a semivaluation $f_\bullet\nu = c(f,\nu)^{-1}f_*\nu\in \mc{V}$. The quantity $c(f,\nu)$ is called the \emph{attraction rate} of $f$ along $\nu$, and is the primary object of study in the paper \cite{GR}, the results of which we will use shortly.

\begin{proof}[Proof of {\hyperref[thmD]{Theorem~\ref*{thmD}}}] We begin the proof by deriving an alternate expression for the mixed multiplicity $e(f^*\mf{a}; \mf{a})$ using the valuative language just discussed. Let $\pi_1\colon X_1\to (\C^2,0)$ and $\pi_2\colon X_2\to (\C^2,0)$ be the normalized blowups of the ideals $f^*\mf{a}$ and $\mf{a}$, respectively, and let $Z_i$ be the divisors on $X_i$ for $i = 1,2$ such that $f^*\mf{a}\cdot \mc{O}_{X_1} = \mc{O}_{X_1}(Z_1)$ and $\mf{a}\cdot \mc{O}_{X_2} = \mc{O}_{X_2}(Z_2)$. Let $E_1,\ldots, E_r$ be the irreducible components of the exceptional locus $\pi_2^{-1}(0)$ of $\pi_2$. For each $i = 1,\ldots, r$, let $a_i\in \Z$ be the integer $Z_2\cdot E_i$, so that $Z_2$ can be written $Z_2 = \sum_{i=1}^r a_i\check{E_i}$. By \hyperref[teissier2]{Theorem~\ref*{teissier2}}, these integers $a_i$ are nonnegative.

As a consequence of Hironaka's theorem on resolution of singularities, it is possible to find blowups $\eta_1\colon Y_1\to (\C^2,0)$ and $\eta_2\colon Y_2\to (\C^2,0)$ over $0$ with the following properties:\begin{enumerate}
\item[1.] Each $\eta_i$ is a \emph{log resolution} of both of the ideals $\mf{a}$ and $f^*\mf{a}$, that is to say, the ideals $\mf{a}\cdot \mc{O}_{Y_i}$ and $f^*\mf{a}\cdot \mc{O}_{Y_i}$ are locally principal. In particular, the $\eta_i$ dominate both $\pi_1$ and $\pi_2$, so there exist proper birational morphisms $\sigma_i\colon Y_1\to X_i$ and $\gamma_i\colon Y_2\to X_i$ for $i = 1,2$ such that one has $\eta_1 = \pi_i\sigma_i$ and $\eta_2 = \pi_i\gamma_i$.
\item[2.]  The map $f$ lifts to a holomorphic map $F \colon Y_1\to Y_2$, or in other words, $F = \eta_2^{-1}f\eta_1$ has no indeterminacy points.
\item[3.] If $\wt{E}_1,\ldots, \wt{E}_r$ denote the strict transforms of the $E_i$ in $Y_1$ under $\sigma_2$, then $F$ does not contract any of the $\wt{E}_i$ to a point.
\end{enumerate} The ideal $f^*\mf{a}\cdot \mc{O}_{Y_1}$ is obtained on the one hand by first pulling back the ideal $\mf{a}$ by $f$ to get $f^*\mf{a}$, and then by pulling back $f^*\mf{a}$ by $\eta_1$ to get $f^*\mf{a}\cdot \mc{O}_{Y_1}$. On the other hand, because $f\eta_1 = \eta_2F$, we may also obtain $f^*\mf{a}\cdot \mc{O}_{Y_1}$ by first pulling back $\mf{a}$ by $\eta_2$ to get $\mf{a}\cdot \mc{O}_{Y_2} = \mc{O}_{Y_2}(\gamma_2^*Z_2)$, and then pulling this back by $F$ to get $f^*\mf{a}\cdot \mc{O}_{Y_1} = \mc{O}_{Y_1}(F^*\gamma_2^*Z_2)$. Using this and the fact that $\mf{a}\cdot \mc{O}_{Y_1} = \mc{O}_{Y_1}(\sigma_2^*Z_2)$, \hyperref[teissier2]{Theorem~\ref*{teissier2}} implies that \[
e(f^*\mf{a};\mf{a}) = -( \sigma_2^*Z_2\cdot F^*\gamma_2^*Z_2) = -(F_*\sigma_2^*Z_2\cdot \gamma_2^*Z_2).\] Using our  previously derived expression $Z_2 = \sum_{i=1}^r a_i\check{E}_i$, we can express this as \begin{equation}\label{Beqn1}
e(f^*\mf{a}; \mf{a}) = -\sum_{i,j = 1}^r a_ia_j(F_*\sigma_2^*\check{E}_i \cdot \gamma_2^*\check{E}_j) = -\sum_{i,j=1}^r a_ia_jb_{E_i}b_{E_j}(F_*Z_{\nu_{E_i},\eta_1} \cdot Z_{\nu_{E_j},\eta_2}).\end{equation} Because of our assumption that $F$ does not contract any of the $\wt{E}_i$ to a point, we may now apply \cite[Lemma 1.10]{favre:holoselfmapssingratsurf} to conclude that \begin{align*}
e(f^*\mf{a}; \mf{a}) & = -\sum_{i,j=1}^r a_ia_jb_{E_i}b_{E_i}c(f, \nu_{E_i})(Z_{f_\bullet\nu_{E_i}, \eta_2}\cdot Z_{\nu_{E_j}, \eta_2})\\
& = \sum_{i,j =1}^r a_ia_jb_{E_j}b_{E_j}\alpha(f_\bullet\nu_{E_i}\wedge \nu_{E_j})c(f, \nu_{E_i}).
\end{align*} Of course, this identity is equally valid for any iterate of $f$, leading us to our final equation for the local intersection multiplicities $\mu(n)$. Namely, if $(z,w)\in U$, then \begin{equation}\label{Beqn4}
\mu(n) = \sum_{i,j = 1}^r a_ia_jb_{E_i}b_{E_j}\alpha(f_\bullet^n\nu_{E_i}\wedge \nu_{E_j})c(f^n, \nu_{E_i})\,\,\,\,\,\,\mbox{ for all $n\geq 1$}.\end{equation}

To prove that $\mu(n)$ eventually satisfies an integral linear recursion relation, it therefore suffices to show that for each $i$ and $j$, the sequence $\alpha(f^n_\bullet\nu_{E_i}\wedge \nu_{E_j})c(f^n, \nu_{E_i})$ eventually satisfies an integral linear recursion relation. More generally, we will prove the following: if $\nu\in \mc{V}$ is any divisorial valuation, then $\alpha(f^n_\bullet\nu\wedge \nu_{E_j})c(f^n, \nu)$ eventually satisfies an integral linear recursion relation. 

We first observe that we may without loss of generality prove this for any iterate $f^p$ of $f$. Indeed, the sequence $\alpha(f^n_\bullet\nu\wedge \nu_{E_j})c(f^n,\nu)$ is obtained by joining the sequences \[
\{\alpha((f^{np}_\bullet f^k_\bullet\nu)\wedge \nu_{E_j})c(f^{np}, f^k_\bullet\nu)\}_{n=1}^\infty\,\,\,\,\,\,\,\,\,\,k = 0,1,\ldots, p-1\] in alternating fashion. If each of these sequences eventually satisfies an integral linear recursion relation, then so does the combined sequence.

Immediately let us replace $f$ by $f^2$. By doing so, we may apply \cite[Theorem 3.1]{GR} to conclude that there is a fixed point $\nu_\star\in \mc{V}$ for $f_\bullet$ such that $f^n_\bullet\nu\to \nu_\star$ in a strong sense as $n\to \infty$. In particular, $\alpha(f^n_\bullet\nu\wedge \nu_{E_j})\to \alpha(\nu_\star\wedge \nu_{E_j})<+\infty$. If the sequence $\alpha(f^n_\bullet\nu\wedge \nu_{E_j})$ is eventually constant, then we are done by \cite[Theorem 6.1]{GR}, which says that $c(f^n,\nu)$ eventually satisfies an integral linear recursion relation.

We may assume, therefore, that the sequence $\alpha(f^n_\bullet\nu\wedge \nu_{E_j})$ is not eventually constant; this implies, in particular, that $f^n_\bullet \nu \leq \nu_{E_j}$ for infinitely many $n$. Such a condition imposes strong restrictions on the possible asymptotic behavior of the sequence $f^n_\bullet\nu$. Indeed, the work done in \cite{GR} shows that in this case, $\nu_\star\leq \nu_{E_j}$, and $f^n_\bullet\nu\to \nu_\star$ along a periodic cycle of tangent directions $\vec{v}_1,\ldots, \vec{v}_p$ at $\nu_\star$  (see \cite[\S2]{Mattias} for the notion of tangent directions at a valuation). Replacing $f$ by $f^p$, we can assume without loss of generality that $f^n_\bullet \nu\to \nu_\star$ along a fixed tangent direction $\vec{v}$.

In this situation, it is a rather non-trivial fact (see \cite[\S5.2]{MR2339287}) that one can find a blowup $\pi\colon X\to (\C^2,0)$  and irreducible components $V, W$ of $\pi^{-1}(0)$ which intersect transversely such that the following hold: \begin{enumerate}
\item[1.] The interval $I:= [\nu_{V}, \nu_{W}]\subset \mc{V}$ is invariant for $f_\bullet$, that is, $f_\bullet (I)\subseteq I$.
\item[2.] The interval $I$ contains $\nu_\star$ and intersects the tangent direction $\vec{v}$.
\item[3.] The divisorial valuation $\nu_{E_j}$ corresponds to an irreducible component of $\pi^{-1}(0)$.
\end{enumerate} We now proceed similarly to the proof of \cite[Lemma 6.2]{GR}. For any  valuation $\lambda\in I$ and any $n\geq 1$, one has that $Z_{f^{np}_* \lambda, \pi} = r_n\check{V} + s_n\check{W}$ for some constants $r_n, s_n\geq 1$, and so \[
c(f^{np}, \lambda)\alpha(f^{np}_\bullet \lambda\wedge \nu_{E_j}) = -(Z_{f^{np}_*\lambda, \pi}\cdot Z_{\nu_{E_j}, \pi}) = -r_n(\check{V}\cdot Z_{\nu_{E_j}, \pi}) - s_n(\check{W}\cdot Z_{\nu_{E_j}, \pi}).\] Just as in the proof of \cite[Lemma 6.2]{GR}, there is a $2\times 2$ integer matrix $M$ for which  one has the identity $(r_n, s_n) = (r_{n-1}, s_{n-1})M$. We conclude that $c(f^{np}, \lambda)\alpha(f^{np}_\bullet\lambda\wedge \nu_{E_j})$ satisfies the integral linear recursion relation with characteristic polynomial $t^2 - \mathrm{tr}(M)t + \det(M)$. If $N\geq 1$ is large enough that $f^N_\bullet\nu\in I$, this proves that the sequence $\{(\alpha(f^n_\bullet\nu\wedge \nu_{E_j})c(f^n, \nu)\}_{n = N}^\infty$ satisfies an integral linear recursion relation, completing the proof.
\end{proof}

\begin{rem} One consequence of the study of the sequences $c(f^n, \nu)$ in \cite{GR} is that for any divisorial valuation $\nu$, there is a constant $B = B(\nu)$ such that $c(f^n,\nu)\sim Bc_\infty^n$, where $c_\infty>1$ is the \emph{asymptotic attraction rate} of $f$, that is, \[
c_\infty := \lim_{n\to \infty}\left(\max\{s\geq 1 : f^{n*}\mf{m}\subseteq \mf{m}^s\}\right)^{1/n}.\] Since one has $\alpha(f^n_\bullet\nu_{E_i}\wedge \nu_{E_j}) \leq \alpha(\nu_{E_j})<+\infty$ for all $n$, we can conclude from equation (\ref{Beqn4}) that there exist constants $A_1, A_2>0$ such that $A_1c_\infty^n\leq \mu(n)\leq A_2c_\infty^n$ for all $n$. Moreover, as we saw in the proof of \hyperref[thmD]{Theorem~\ref*{thmD}}, if we replace $f$ by $f^2$, then $f^n_\bullet \nu_{E_i}\to \nu_\star$ for some $\nu_\star\in \mc{V}$, and thus $\alpha(f^n_\bullet\nu_{E_i}\wedge \nu_{E_j})\to \alpha(\nu_\star\wedge\nu_{E_j})<+\infty$. Therefore in this case equation (\ref{Beqn4}) implies that $\mu(n)\sim Ac_\infty^n$ for some constant $A>0$.
\end{rem}

\begin{rem} For any irreducible curve germ $C$ through the origin in $\C^2$, there is a corresponding \emph{curve valuation} $\nu_C\in \mc{V}$, and  one has $f_\bullet \nu_C = \nu_{f(C)}$. Thus the dynamics of $f$ on curves $C$ through the origin is reflected in the dynamics of $f_\bullet$ on curve valuations $\nu_C$. With this in mind, it should not come as a surprise that one can study Arnold's conjecture by examining the dynamics of $f_\bullet$ on $\mc{V}$. In brief, the outline of our proof of \hyperref[thmD]{Theorem~\ref*{thmD}} can be expressed as follows: for ``general" enough curves $C$, the dynamics of $f_\bullet$ on the curve valuations $\nu_C$ is reflected in the dynamics of $f_\bullet$ on certain associated divisorial valuations. The dynamics of $f_\bullet$ on divisorial valuations is very regular: \cite[Theorem 3.1]{GR} says that there is a set $K$ of fixed valuations of $f_\bullet$ that attract all $\nu\in \mc{V}$ with $\alpha(\nu)<+\infty$, which includes all divisorial valuations.

To emphasize this point further, when $f(x,y) = (x^2-y^4, y^4)$ is the example studied in \S\S1-3, the attracting set $K\subset \mc{V}$ consists of a single point $\nu_\star\in \mc{V}$, and the points $\nu\in \mc{V}$ that are \emph{not} attracted to $\nu_\star$ are exactly the curve valuations $\nu_{C_s}$ associated to the curves $C_s$ constructed in \S1. That is, the curve valuations $\nu_{C_s}$ are exactly the points of $\mc{V}$ where the dynamics of $f_\bullet$ is \emph{not} regular.

In the case when $f\colon (\C^2,0)\to (\C^2,0)$ is a finite germ such that the derivative $f'(0)$ has exactly one nonzero eigenvalue, M.\ Ruggiero \cite{MR2904007} has studied the dynamics of $f_\bullet\colon \mc{V}\to \mc{V}$, and found that there exist fixed curve valuations $\nu_1,\nu_2\in \mc{V}$ such that $f^n_\bullet\nu\to \nu_1$ as $n\to \infty$ for all $\nu\in \mc{V}\smallsetminus\{\nu_2\}$. From this one can conclude that the local intersection multiplicities $\mu(n) = C\cdot f^n(D)$ grow at  most exponentially fast for all curve germs $C,D$ through $0$, provided these numbers are always finite, confirming \cite[Theorem 4]{MR1215971}.
\end{rem}

\bibliographystyle{alpha}
\bibliography{References}

\begin{thebibliography}{MZMS13}

\bibitem[Arn90a]{MR1139553}
Vladimir Arnold.
\newblock Dynamics of complexity of intersections.
\newblock {\em Bol. Soc. Brasil. Mat. (N.S.)}, 21(1):1--10, 1990.

\bibitem[Arn90b]{MR1039340}
Vladimir Arnold.
\newblock Dynamics of intersections.
\newblock In {\em Analysis, et cetera}, pages 77--84. Academic Press, Boston,
  MA, 1990.

\bibitem[Arn93]{MR1215971}
Vladimir Arnold.
\newblock Bounds for {M}ilnor numbers of intersections in holomorphic dynamical
  systems.
\newblock In {\em Topological methods in modern mathematics ({S}tony {B}rook,
  {NY}, 1991)}, pages 379--390. Publish or Perish, Houston, TX, 1993.

\bibitem[Arn94]{MR1350971}
Vladimir Arnold.
\newblock Sur quelques probl\`emes de la th\'eorie des syst\`emes dynamiques.
\newblock {\em Topol. Methods Nonlinear Anal.}, 4(2):209--225, 1994.

\bibitem[Arn04]{MR2078115}
Vladimir Arnold.
\newblock {\em Arnold's problems}.
\newblock Springer-Verlag, Berlin, 2004.
\newblock Translated and revised edition of the 2000 Russian original, with a
  preface by V. Philippov, A. Yakivchik and M. Peters.

\bibitem[BEK12]{BEK}
Xavier Buff, Adam Epstein, and Sarah Koch.
\newblock B\"{o}ttcher coordinates.
\newblock {\em Indiana Univ. Math. J.}, 61(5):1765 -- 1799, 2012.

\bibitem[CAR11]{MR2853790}
Eduardo Casas-Alvero and Joaquim Ro{\'e}.
\newblock Iterated inverse images of plane curve singularities.
\newblock {\em Acta Math. Sin. (Engl. Ser.)}, 27(12):2319--2328, 2011.

\bibitem[DDS05]{MR2195140}
Tien~Cuong Dinh, Romain Dujardin, and Nessim Sibony.
\newblock On the dynamics near infinity of some polynomial mappings in {${\Bbb
  C}^2$}.
\newblock {\em Math. Ann.}, 333(4):703--739, 2005.

\bibitem[EL77]{MR0467800}
David Eisenbud and Harold Levine.
\newblock An algebraic formula for the degree of a {$C^{\infty }$} map germ.
\newblock {\em Ann. of Math. (2)}, 106(1):19--44, 1977.
\newblock With an appendix by Bernard Teissier, ``Sur une in{\'e}galit{\'e}
  {\`a} la Minkowski pour les multiplicit{\'e}s''.

\bibitem[Fav00]{MR1759437}
Charles Favre.
\newblock Classification of 2-dimensional contracting rigid germs and {K}ato
  surfaces. {I}.
\newblock {\em J. Math. Pures Appl. (9)}, 79(5):475--514, 2000.

\bibitem[Fav10]{favre:holoselfmapssingratsurf}
Charles Favre.
\newblock Holomorphic self-maps of singular rational surfaces.
\newblock {\em Publ. Mat.}, 54(2):389--432, 2010.

\bibitem[FJ04]{MR2097722}
Charles Favre and Mattias Jonsson.
\newblock {\em The valuative tree}, volume 1853 of {\em Lecture Notes in
  Mathematics}.
\newblock Springer-Verlag, Berlin, 2004.

\bibitem[FJ07]{MR2339287}
Charles Favre and Mattias Jonsson.
\newblock Eigenvaluations.
\newblock {\em Ann. Sci. \'Ecole Norm. Sup. (4)}, 40(2):309--349, 2007.

\bibitem[GR12]{GR}
William Gignac and Matteo Ruggiero.
\newblock Growth of attraction rates for iterates of a superattracting germ in
  dimension two.
\newblock 2012.
\newblock To appear in \emph{Indiana Univ. Math. J.}

\bibitem[Har77]{MR0463157}
Robin Hartshorne.
\newblock {\em Algebraic geometry}.
\newblock Springer-Verlag, New York, 1977.
\newblock Graduate Texts in Mathematics, No. 52.

\bibitem[HP94]{MR1275463}
John Hubbard and Peter Papadopol.
\newblock Superattractive fixed points in {${\bf C}^n$}.
\newblock {\em Indiana Univ. Math. J.}, 43(1):321--365, 1994.

\bibitem[Jon12]{Mattias}
Mattias Jonsson.
\newblock Dynamics on {B}erkovich spaces in low dimensions.
\newblock 2012.
\newblock arXiv:1201.1944.

\bibitem[Koz92]{MR1199715}
Oleg Kozlovski{\u\i}.
\newblock The dynamics of intersections of analytic manifolds.
\newblock {\em Dokl. Akad. Nauk}, 323(5):823--825, 1992.

\bibitem[Laz04]{MR2095471}
Robert Lazarsfeld.
\newblock {\em Positivity in algebraic geometry. {I}}, volume~48 of {\em
  Ergebnisse der Mathematik und ihrer Grenzgebiete. 3. Folge. A Series of
  Modern Surveys in Mathematics}.
\newblock Berlin, 2004.

\bibitem[MZMS13]{MZMS}
Mahdi Majidi-Zolbanin, Nikita Miasnikov, and Lucien Szpiro.
\newblock Entropy and flatness in local algebraic dynamics.
\newblock {\em Publ. Mat.}, 57(2):509 -- 544, 2013.

\bibitem[Ram74]{MR0354663}
Chakravarthi~Padmanabhan Ramanujam.
\newblock On a geometric interpretation of multiplicity.
\newblock {\em Invent. Math.}, 22:63--67, 1973/74.

\bibitem[RG91]{MR1167716}
{\`E}rnesto Rosales~Gonsales.
\newblock On the growth in the number of periodic orbits of dynamical systems.
\newblock {\em Funktsional. Anal. i Prilozhen.}, 25(4):14--22, 96, 1991.

\bibitem[RS78]{MR518229}
David Rees and Rodney Sharp.
\newblock On a theorem of {B}. {T}eissier on multiplicities of ideals in local
  rings.
\newblock {\em J. London Math. Soc. (2)}, 18(3):449--463, 1978.

\bibitem[Rug11]{Rug2}
Matteo Ruggiero.
\newblock Contracting rigid germs in higher dimensions.
\newblock 2011.
\newblock To appear in \emph{Ann. Inst. Fourier (Grenoble)}.

\bibitem[Rug12]{MR2904007}
Matteo Ruggiero.
\newblock Rigidification of holomorphic germs with noninvertible differential.
\newblock {\em Michigan Math. J.}, 61(1):161--185, 2012.

\bibitem[Shi07]{Tomoko}
Tomoko Shinohara.
\newblock A partial horseshoe structure at an indeterminate point of birational
  mapping.
\newblock {\em J. Math. Kyoto Univ.}, 47(1):15--33, 2007.

\bibitem[Shi10]{MR2629648}
Tomoko Shinohara.
\newblock Another construction of a {C}antor bouquet at a fixed indeterminate
  point.
\newblock {\em Kyoto J. Math.}, 50(1):205--224, 2010.

\bibitem[SY13]{SY}
Anna~Leah Seigal and Sergei Yakovenko.
\newblock Local dynamics of intersections: {V}. {I}. {A}rnold's theorem
  revisited.
\newblock 2013.
\newblock arXiv:1303.0472.

\bibitem[Tei73]{MR0374482}
Bernard Teissier.
\newblock Cycles \'evanescents, sections planes et conditions de {W}hitney.
\newblock In {\em Singularit\'es \`a {C}arg\`ese ({R}encontre {S}ingularit\'es
  {G}\'eom. {A}nal., {I}nst. \'{E}tudes {S}ci., {C}arg\`ese, 1972)}, pages
  285--362. Ast\'erisque, Nos. 7 et 8. Soc. Math. France, Paris, 1973.

\bibitem[Tei82a]{MR645731}
Bernard Teissier.
\newblock Bonnesen-type inequalities in algebraic geometry. {I}. {I}ntroduction
  to the problem.
\newblock In {\em Seminar on {D}ifferential {G}eometry}, volume 102 of {\em
  Ann. of Math. Stud.}, pages 85--105. Princeton Univ. Press, Princeton, N.J.,
  1982.

\bibitem[Tei82b]{MR708342}
Bernard Teissier.
\newblock Vari\'et\'es polaires. {II}. {M}ultiplicit\'es polaires, sections
  planes, et conditions de {W}hitney.
\newblock In {\em Algebraic geometry ({L}a {R}\'abida, 1981)}, volume 961 of
  {\em Lecture Notes in Math.}, pages 314--491. Springer, Berlin, 1982.

\bibitem[Vig07]{MR2307152}
Gabriel Vigny.
\newblock Dynamics semi-conjugated to a subshift for some polynomial mappings
  in {$\Bbb C^2$}.
\newblock {\em Publ. Mat.}, 51(1):201--222, 2007.

\bibitem[Yam01]{MR1808626}
Yoshikazu Yamagishi.
\newblock Cantor-bouquet of holomorphic stable manifolds for a periodic
  indeterminate point.
\newblock {\em Nonlinearity}, 14(1):113--120, 2001.

\bibitem[Yam03]{Yam2}
Yoshikazu Yamagishi.
\newblock On the local convergence of {N}ewton's method to a multiple root.
\newblock {\em J. Math. Soc. Japan}, 55(4):897--908, 2003.

\end{thebibliography}

\end{document}